\documentclass[12pt]{article}
\usepackage[english]{babel}
\usepackage{amsthm,amsfonts, amsbsy, amssymb,amsmath,graphicx}
\usepackage{graphics}

\newtheorem{theorem}{Theorem}
\newtheorem{lemma}{Lemma}
\newtheorem{corollary}{Corollary}

\newtheorem{proposition}{Proposition}
\theoremstyle{definition}
\newtheorem{definition}{Definition}
\theoremstyle{remark}
\newtheorem{remark}{Remark}
\newtheorem{example}{Example}

\def\Z{{\mathbb Z}}
\def\N{{\mathbb N}}

\newcommand{\V}{{\mathcal V}}

\newcommand{\eps}{\varepsilon}
\newcommand{\G}{{\mathcal G}}

\DeclareMathOperator{\id}{id}
\DeclareMathOperator*{\bigst}{\raisebox{-0.6ex}{\scalebox{2.5}{$\ast$}}}

\date{}

\title{Parity functors}
\author{Igor Nikonov}

\begin{document}

\maketitle

\begin{abstract}
A parity is a rule to assign labels to the crossings of knot diagrams in a way compatible with
Reidemeister moves. Parity functors can be viewed as parities which provide to each knot diagram its own
coefficient group that contains parities of the crossings. In the article we describe the universal oriented parity functors for free knots and for knots in a fixed surface.
\end{abstract}

\section*{Introduction}

V.O. Manturov~\cite{M3} defined a parity as a rule to assign labels $0$ and $1$
to the crossings of knot diagrams in a way compatible with
Reidemeister moves. Capability to discriminate between odd and even
crossings allows to treat the crossings differently when
transforming knot diagrams or calculating their invariants.
The notion of parity has proved to be an effective tools in knot
theory. It allows to strengthen knot invariants, to prove minimality
theorem and to construct (counter)examples~\cite{IM, IMN1, M1,M2,M3,M4,M5,M6,M7}.


In~\cite{IMN} parities with coefficients in an arbitrary abelian group were defined. In this paper we make one step to further generalization.
Parity functors can be described as parities which assign to each knot diagram its own
coefficient group that contains the parities of crossings of the diagram. The motivation for such a
generalization comes from the following example~\cite[Example 2.1]{IMN1}.




 \begin{figure}[h]
\centering\includegraphics[width=0.3\textwidth]{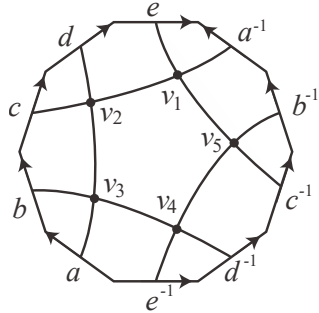} \caption{A flat knot
in a surface of genus two}\label{fig:flat_knot_example}
 \end{figure}

Let us consider the flat knot in Fig.~\ref{fig:flat_knot_example}, given as a curve in a surface of genus two, and take some parity $p$ with coefficients in an abelian group $A$. The knot has $\Z_5$-symmetry. By the symmetry the parities of all crossings
$p(v_i)$, $i=1,\dots,5$, must coincide. From the axioms of parity (cf.~\cite[Lemma 3.7]{IMN1}), the sum of parities of the vertices of the inner pentagon is zero: $$\sum_{i=1}^5 p(v_i)=0.$$ Then $5p(v_i)=0$ and $p(v_i)=0$. Thus, the parity of the crossings is trivial.

On the other hand, if the surface is fixed then one can assign nontrivial homological parity to the crossings~\cite[Example 2.1]{IMN1}. We can retain this nontriviality if we assume that a symmetry of the diagram induces an automorphism $\phi$ of the coefficient group $A$, so we will deal with equalities like $p(v_j)=\phi(p(v_i))$ instead of $p(v_j)=p(v_i)$.

The paper is organized as follows. In the next section we give the main definitions. We start with formal description of knot theories by means of diagrams and their transformations. Next we remind the notion of oriented parity given in~\cite{N} and then introduce the definition of oriented parity functor. We conclude the section with a construction of the universal oriented parity functor.

The second and the third sections are devoted correspondingly to description of the universal oriented parity functors for two knot theories: the free knots and knots in a given surface. The universal parity functor of free knots is a slight extension of the Gaussian parity. The universal functor for knots in a given surface coincides essentially with the homotopic parity. This means that there is no nontrivial parity functors for classical knots.

\section{Definitions}

A conventional form for presentation of knots and links are diagrams. Any knot admits infinitely many diagrams and any two of them can be linked with a sequence of elementary transformations --- diagram isotopies and Reidemeister moves.

Let us consider several examples of knots of various types and their diagrams.

A {\em  $4$-graph} is any union of four-valent graphs and {\em trivial components}, i.e. circles  considered as graphs without vertices and with one (closed) edge.
A {\em virtual diagram} is an embedding of a $4$-graph into plane so that each vertex of the graph is marked as either {\em classical} of {\em virtual} vertex.
At a classical vertex one a pair of opposite edges (called {\em overcrossing}) is chosen. The other pair of opposite edges at the vertex is called {\em undercrossing}. The vertices of the diagram are called also {\em crossings}.

Virtual crossings of a virtual diagram  are usually drawn circled. The undercrossing of a classical vertex is drawn with a broken line whereas the overcrossing is drawn with a solid line (see Fig.~\ref{fig:virtual_trefoil}). A diagram without virtual crossings is {\em classical}.
\begin{figure}[h]
\centering
\includegraphics[width=0.15\textwidth]{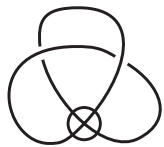}
\caption{Virtual trefoil diagram with two classical and one virtual crossings}\label{fig:virtual_trefoil}
\end{figure}
%

{\em Moves of virtual diagrams} include classical Reidemeister moves ($R1$, $R2$, $R3$) and {\em detour moves ($DM$)} that replace any diagram arc, which has only virtual crossings, with a new arc, which has the same ends and contains only virtual crossings (see Fig.~\ref{pic:virt_moves}). An equivalence class of virtual diagram modulo moves is called a {\em virtual link}~\cite{K1}.

\begin {figure}[h]
\centering
\includegraphics[width=0.3\textwidth]{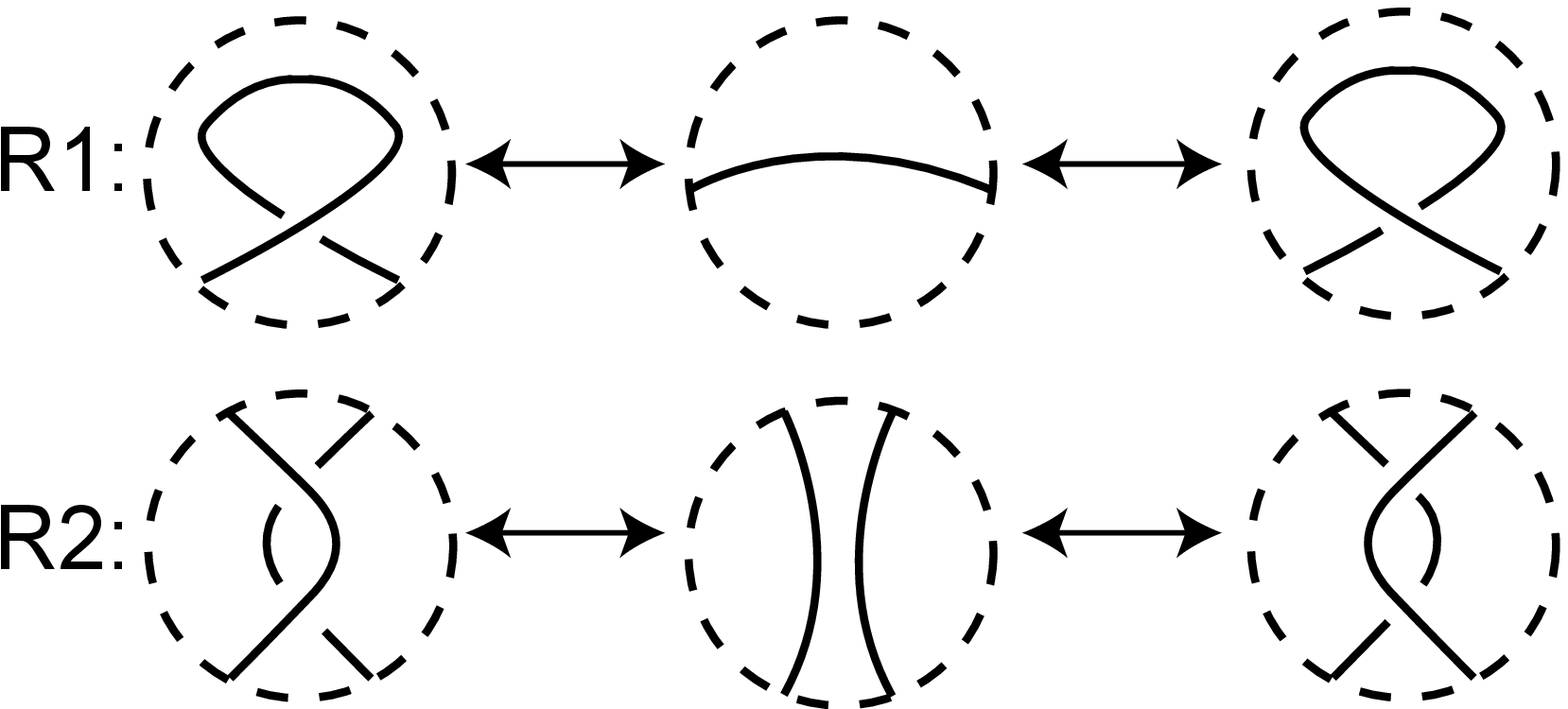}\qquad
\includegraphics[width=0.3\textwidth]{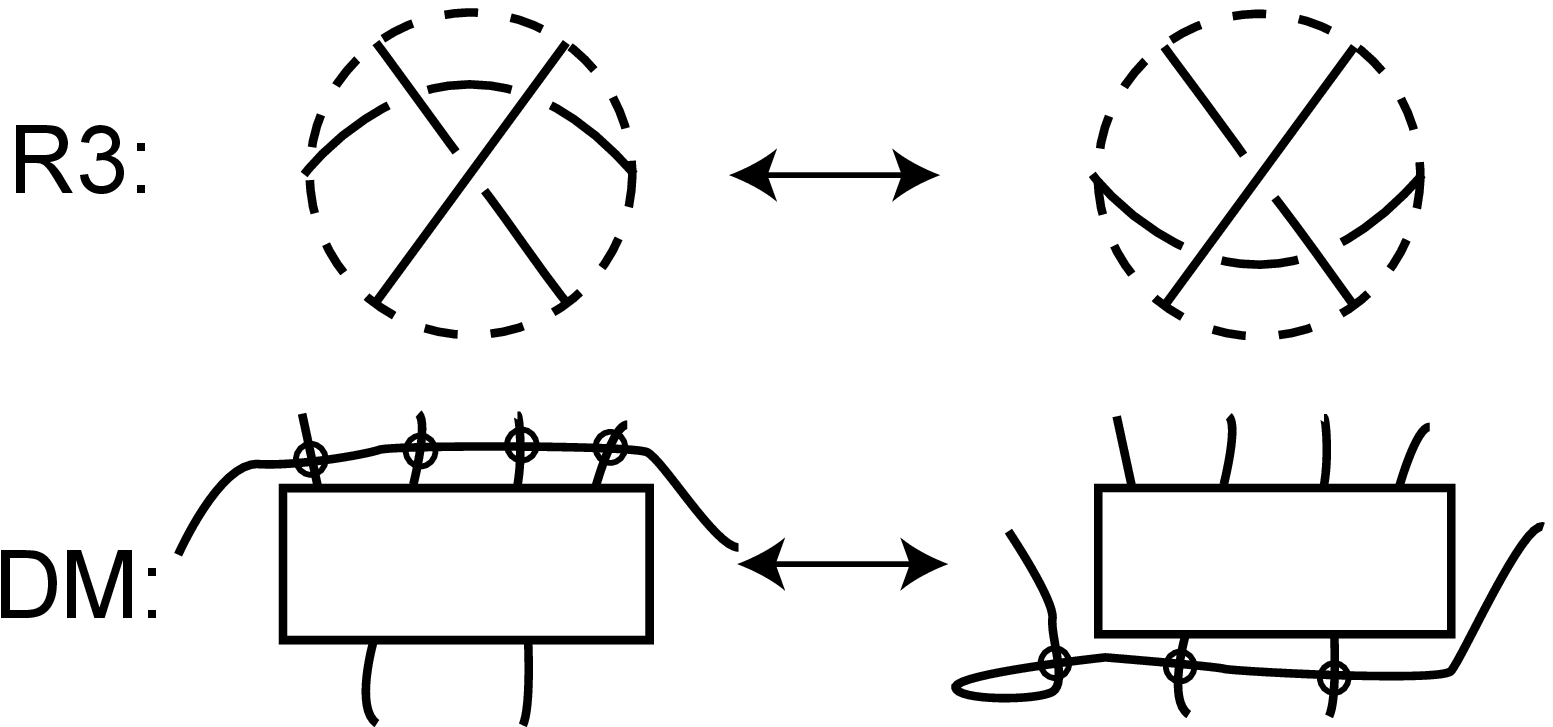}
\caption{Moves of virtual diagrams}\label{pic:virt_moves}
\end {figure}

A {\em classical link} is a link which has at least one classical diagram.

Kuperberg~\cite{Kuperberg} showed that any two classical diagrams that correspond to the same (virtual) link, can be connected by moves so that the intermediate diagrams are classical. By this reason, while working with classical knots and links, we can consider only classical diagrams and classical Reidemeister moves between them.

A {\em unicursal component} is a minimal set of diagram edges which is closed under passing from an edge to its opposite (at some end of the edge) edge. A diagram with one unicursal component is called a {\em diagram of a virtual knot}.

A {\em virtual knot} is an equivalence class of diagrams with one unicursal component.

There are other descriptions of virtual knots.

A {\em virtual link} can be viewed as an equivalence class of pairs $(S,D)$ where $S$ is a closed oriented surface and $D$ is a diagram in $S$ whose crossings are all classical~\cite{KK}. The equivalence relation is generated by diagram isotopies, classical Reidemeister moves and stabilizations (see Fig.~\ref{fig:stabilization}) which change the surface.

\begin{figure}[h]
\centering
  \includegraphics[width=0.5\textwidth]{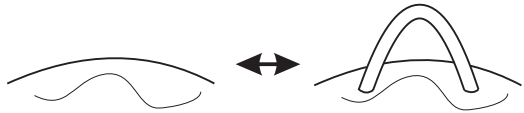}\\
  \caption{Stabilization}\label{fig:stabilization}
\end{figure}

If one excludes the stabilization moves she gets a knot theory in a given surface.

Let $S$ be a two-dimensional connected closed oriented surface. A {\em link (knot) in the surface $S$} is an equivalence class of  diagrams (diagrams with one unicursal component) in $S$  modulo diagram isotopies and classical Reidemeister moves $R1$, $R2$, $R3$.

Links and knots it the sphere $S^2$ are the classical links and knots.

Thirdly, (oriented) virtual knots can be defined by means Gauss diagrams~\cite{GPV}. The {\em Gauss diagram} $G=G(D)$ of a virtual knot diagram $D$ is a chord diagram whose chords correspond to the classical crossings of $D$, see Fig.~\ref{fig:virtual_gauss_diagram}. The chords carry an orientation (from over-crossing to under-crossing) and the sign of the crossings, see Fig.~\ref{fig:crossing_sign}.

\begin{figure}[h]
\centering\includegraphics[width=0.15\textwidth]{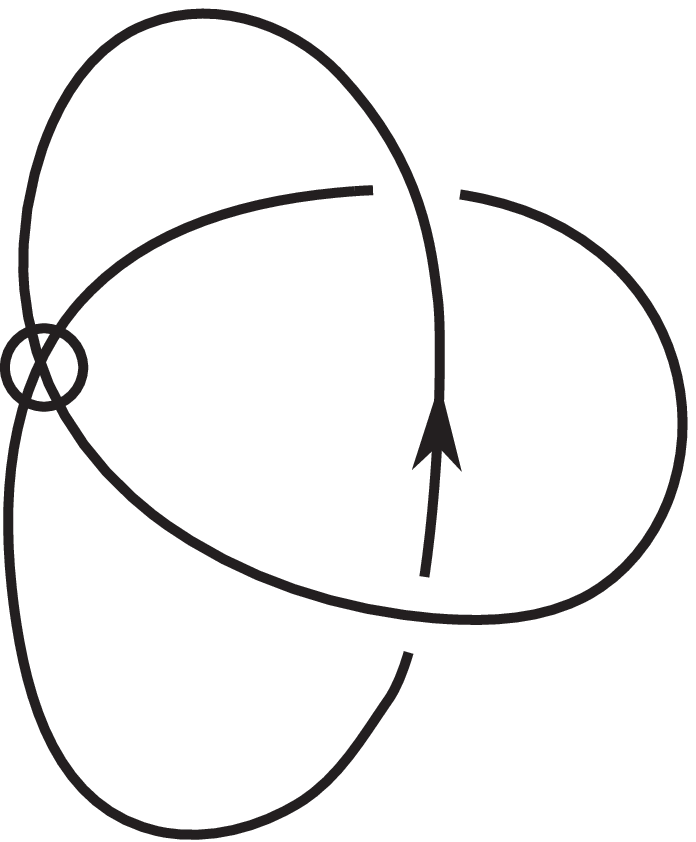}\qquad
\includegraphics[width=0.15\textwidth]{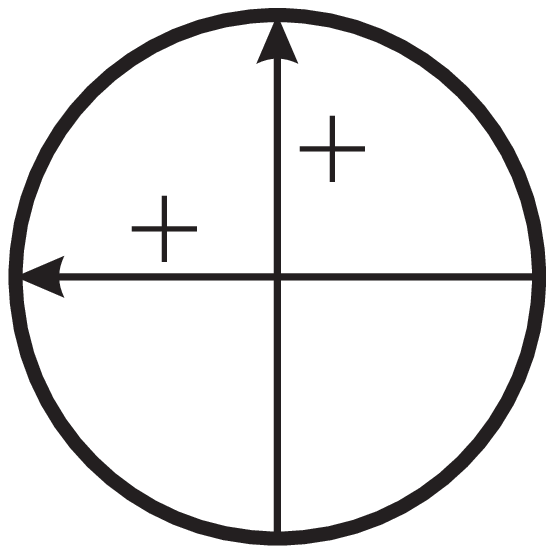}
\caption{Gauss diagram of the virtual trefoil}\label{fig:virtual_gauss_diagram}
\end{figure}

\begin{figure}[h]
\centering\includegraphics[width=0.2\textwidth]{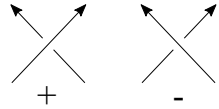}
\caption{The sign of a crossing}\label{fig:crossing_sign}
\end{figure}

Classical Reidemeister moves induce transformations of Gauss diagrams (see Fig.~\ref{fig:reidemeister_gauss}). Virtual knots are exactly the equivalence classes of Gauss diagrams modulo the induced Reidemeister moves.

\begin{figure}[h]
\centering\includegraphics[width=0.6\textwidth]{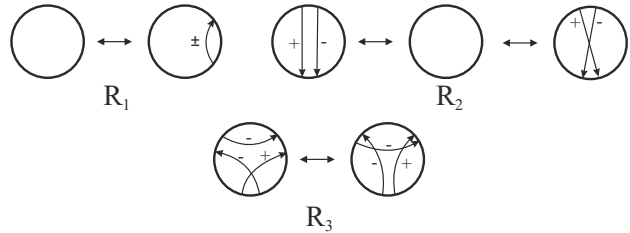}
\caption{Reidemeister moves on Gauss diagrams}\label{fig:reidemeister_gauss}
\end{figure}

The three given definitions of virtual knots are equivalent~\cite{CKS,GPV}.

If one admits crossing switch transformations (Fig.~\ref{fig:crossing_switch}) of virtual diagrams, i.e. neglects the over-undercrossing structure, one gets the theory of {\em flat knots}. For knots in a fixed surface, factorization by crossing switch gives the theory of {\em generally immersed curves in the surface}.

\begin{figure}[h]
\centering\includegraphics[width=0.25\textwidth]{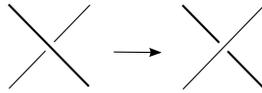}
\caption{Crossing switch}\label{fig:crossing_switch}
\end{figure}

The further factorization by virtualization move (Fig.~\ref{fig:virtualization}) leads to the theory of {\em free knots}.

\begin{figure}[h]
\centering\includegraphics[width=0.25\textwidth]{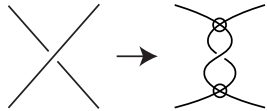}
\caption{Virtualization move}\label{fig:virtualization}

\end{figure}

Crossing switch and virtualization moves induces transformations on Gauss diagrams (Fig.~\ref{fig:gauss_virtualization}). Therefore, the theory of free knots which is obtained by factorization by these tranformations, is the theory of chord diagrams without orientation and labels on the chords. The moves of free knots are the moves in Fig.~\ref{fig:reidemeister_gauss} after one has wiped the arrowheads and signs.

\begin{figure}[h]
\centering\includegraphics[width=0.25\textwidth]{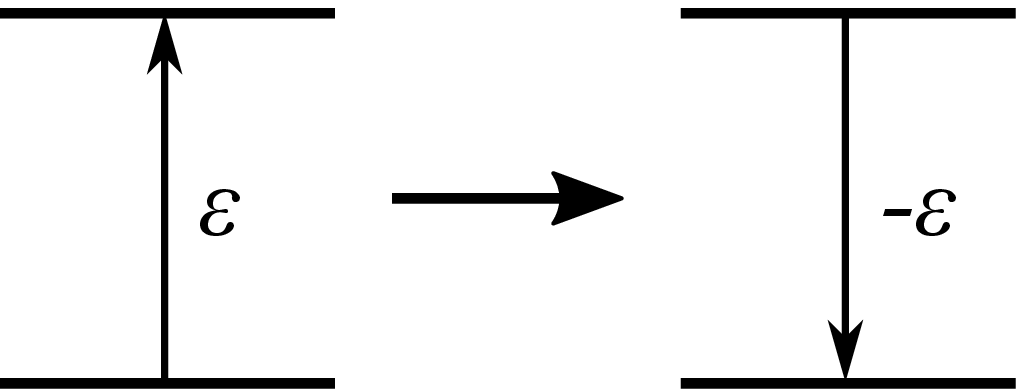}
\qquad\qquad\includegraphics[width=0.25\textwidth]{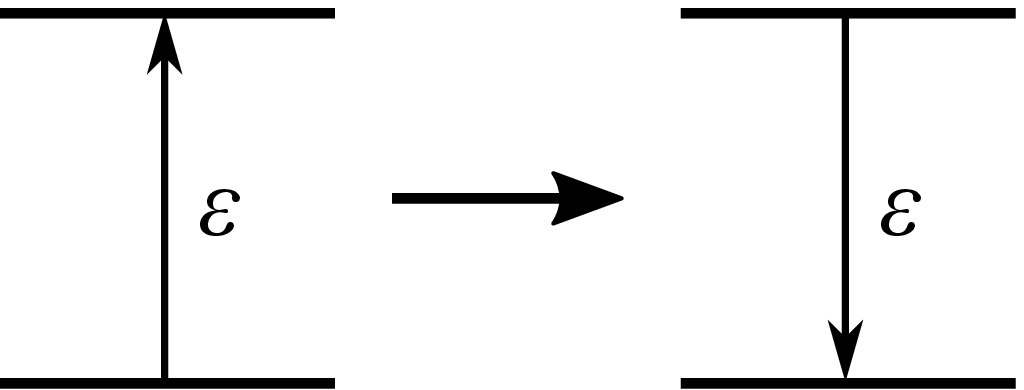}
\caption{Crossing switch (left) and virtualization (right) of a Gauss diagram}\label{fig:gauss_virtualization}
\end{figure}

%
%
%


\subsection {Category of knot diagrams}

Let $\mathcal{K}$ be a knot. We shall use the notion of `knot' in
one of the following situations:
 \begin{itemize}
  \item virtual knot;
  \item flat knot;
  \item free knot;
  \item knot in a given surface
  \item homotopy class of curves immersed in a given surface;
  \item classical knot.
 \end{itemize}

\begin{definition}\label{def:knot_category}
The {\em category of diagrams $\mathfrak{K}$ of the knot $\mathcal{K}$}
is the small category whose objects are the diagrams of $\mathcal{K}$ and
morphisms are (formal) compositions
of {\em elementary morphisms}. By an elementary morphism we mean
 \begin{itemize}
  \item an isotopy of a diagram;
  \item an isomorphism between diagrams (in case of virtual, flat or free knot);
  \item a Reidemeister move.
 \end{itemize}
\end{definition}

Since the number of vertices of a diagram may change under
Reidemeister moves, there is no bijection between the sets of
vertices of two diagrams connected by a sequence of Reidemeister
moves. To construct any connection between two sets of vertices we
introduce the notion of a partial bijection which means just
the bijection between the subsets of vertices corresponding to each
other in the two diagrams.

 \begin{definition}\label{def:partial_bijection}
 Let $X$ and $Y$ be two sets. A {\em partial bijection} between $X$ and $Y$ is a triple
$(\widetilde X,\widetilde Y,\phi)$, where $\widetilde X\subset X$,
$\widetilde Y\subset Y$ and $\phi\colon\widetilde X\to \widetilde Y$
is a bijection. The subset $dom(\phi)=\widetilde X$ is the {\em domain}, and $im(\phi)=\widetilde Y$ is the {\em image} of the partial bijection.

Let $G$ and $H$ be two groups. A {\em partial isomorphism} between $G$ and $H$ is a triple
$(\widetilde G,\widetilde H,\phi)$, where $\widetilde G\subset G$,
$\widetilde H\subset H$  are subgroups and $\phi\colon\widetilde G\to \widetilde H$
is an isomorphism.
 \end{definition}

Sets with partial bijections form a category $\mathfrak{S}_{part}$, and groups with partial isomorphisms form a category $\mathfrak G_{part}$. Let $\mathfrak A_{part}$ be the subcategory of $\mathfrak G_{part}$ which consists of the abelian groups and partial isomorphisms between them.

\begin{definition}\label{def:vertex_functor}
  Let $\mathfrak{K}$ be a knot and $\mathfrak{K}$ be its diagram category. The {\em vertex functor} is a
functor $\V$ from $\mathfrak{K}$ to the $\mathfrak{S}_{part}$ such that for each diagram
$K\in Ob(\mathfrak K)$ we define $\V(K)$ to be the set of crossings of $K$ (or chords of $K$ if it is a chord diagram). Any elementary morphism
$f\colon K\to K'$ naturally induces a partial bijection $\V(f)\colon\V(K)\to\V(K')$. For a morphism $f=f_1\circ\cdots\circ f_n$, where $f_i$, $i=1,\dots,n$ are elementary, the partial bijection $\V(f)$ is defined as the composition $\V(f_1)\circ\cdots\circ\V(f_n)$.
\end{definition}

Below we will use the notation $f_*(v)$ for $\V(f)(v)$, $v\in dom(\V(f))$, for short.

\begin{remark}
  For an elementary morphism $f\colon K\to K'$ the partial map $\V(f)$ is a bijection in all cases except a first or a second Reidemeister move. If $f$ is an increasing (resp., decreasing) first Reidemeister move then the bijection domain includes $\V(K)$ and differs from $\V(K')$ by one element (resp., includes $\V(K')$ and differs from $\V(K)$ by one element). If $f$ is a second Reidemeister move the partial bijection $\V(f)$ embraces all the crossings of the diagrams $K$ and $K'$ except two appearing or disappearing crossings.
\end{remark}

\subsection{Parities and parity functors}

Let $A$ be an abelian group. Let us recall the definition of parity~\cite{IMN}.

 \begin{definition}\label {def:parity}
A {\em parity $p$ on diagrams of a knot $\mathcal{K}$ with
coefficients in $A$} is a family of maps $p_D\colon\V(D)\to A$,
$D\in Ob(\mathfrak{K})$, such that for any elementary morphism
$f\colon D\to D'$ the following holds:
  \begin{itemize}
   \item[(P0)]
$p_{D'}(f_*(v))=p_DK(v)$ for any $v\in dom(f_*)$;
   \item[(P1)]
$p_D(v)=0$ if $f$ is the decreasing first Reidemeister move and $v$
is the disappearing crossing;
   \item[(P2)]
$p_D(v_1)+p_D(v_2)=0$ if $f$ is a decreasing second Reidemeister
move and $v_1,\,v_2$ are the disappearing crossings;
   \item[(P3)]
$p_D(v_1)+p_D(v_2)+p_D(v_3)=0$ if $f$ is a third Reidemeister move
and $v_1,\,v_2,\,v_3$ are the crossings participating in this move.
  \end{itemize}
\end{definition}

Among examples of parities are the Gaussian parity, the link parity (with coefficients in $\Z_2$) of virtual knots and the homological parity of knots in a given surface (with coefficients in the first homology group of the surface)~\cite{IMN1}.

\begin{example}[Gaussian parity]
Let $D$ be a diagram of a knot $\mathcal K$ and $c\in\V(D)$ be a crossing of $D$. The crossing $c$ splits $D$ into two halves (see Fig.~\ref{fig:knot_halves}). The Gaussian parity $gp_D(c)\in\Z_2$ of $c$ is the parity of the number of crossing points on any half.

\begin{figure}[h]
\centering\includegraphics[width=0.5\textwidth]{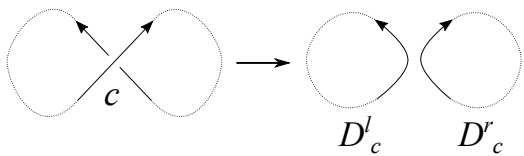}
\caption{Halves of the diagram}\label{fig:knot_halves}
\end{figure}
\end{example}

\begin{example}[Link parity]\label{exa:link_parity}
Let $D$ be a diagram of some link $\mathcal L$ with two components. The crossings of the diagram $D$ can be divided into {\em self-crossings} of a component of $\mathcal L$ and {\em mixed crossings} where two different components intersect. One assigns the parity $0$ to the self-crossings and the parity $1$ to the mixed crossings.
\end{example}

\begin{example}[Homological parity]
Let $\mathcal K$ be a knot in a fixed oriented surface $S$ and $D\subset S$ be its diagram. Any crossing $v\in\V(D)$ splits the diagram into left and right halves. This halves are cycles in the surface. The {\em homological parity} of the crossing $v$ is the element $p^h_D(v)=[D^l_v]\in H_1(S,\Z_2)/[\mathcal K]$ where $[\mathcal K]$ is the homology class of the knot $\mathcal K$.
\end{example}

 \begin{definition}\label{def:parity_functor}
A {\em parity functor $P$ on the diagrams of a knot $\mathcal{K}$} is a pair $(P, \mathcal A)$ where $\mathcal A$ is a functor from the category of its diagrams $\mathfrak K$ to the category $\mathfrak A_{part}$, and $P$ is a natural map between the vertex functor $\V$ and the coefficient functor $\mathcal A$ (in fact, the composition of $\mathcal A$ with the forgetting functor $\mathfrak A_{part}\to\mathfrak S_{part}$), i.e. a
set of maps $P_D\colon\V(D)\to \mathcal A(D)$ for
each diagram $D$, such that for any elementary morphism $f\colon
D\to D'$:
  \begin{itemize}
\item[(P0)] for any crossing $v\in dom(f_*)\subset \V(D)$ one has $P_D(v)\in dom(\mathcal A(f))$ and $P_{D'}(f_*(v))=\mathcal A(f)(P_D(v))$;
\item[(P1)] $P_D(v)=0$ if $f$ is a decreasing first Reidemeister move and $v$
is the disappearing crossing;
\item[(P2)] $P_D(v_1)+P_D(v_2)=0$ if $f$ is a decreasing second Reidemeister
move and $v_1,\,v_2$ are the disappearing crossings;
\item[(P3)] $P_D(v_1)+P_D(v_2)+P_D(v_3)=0$ if $f$ is a third Reidemeister move
and $v_1,\,v_2,\,v_3$ are the crossings participating in the move.
  \end{itemize}
 \end{definition}

\begin{remark}
1. The first condition of the definition above represents the fact $P$ is a natural map.

2. Any parity $p$ with coefficients in a group $A$ is a parity functor with the constant coefficient functor $\mathcal A\equiv A$ (i.e. $\mathcal A(D)=A$ for any diagram $D$  and $\mathcal A(f)=\id_A$ for any morphism $f\colon D\to D'$ ), and maps $P_D=p_D\colon \V(D)\to A$, $D\in Ob(\mathfrak K)$.
\end{remark}

Many properties of parities with coefficients can be extended to parity functors. For example, the following statement shows that for parity functors, parities of crossings are defined modulo 2 (cf.~\cite[Lemma 4.4]{IMN}).

 \begin{proposition}\label{prop:parity_mod2}
Let $(P,\mathcal A)$ be a parity functor. Then $2P_D(a)=0$ for any diagram $D\in Ob(\mathfrak K)$ and any crossing $a\in\V(D)$.
 \end{proposition}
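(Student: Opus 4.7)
The plan is to mimic the proof of Lemma 4.4 in IMN, adapting it to handle the diagram-dependent coefficient groups of a parity functor. The core idea is that by introducing extra crossings near $a$ via R2 moves, one can place (the image of) $a$ at the apex of two R3 triangles whose remaining four vertices pair into two R2 bigons; the four resulting axiom equations will then sum to give $2P_D(a)=0$.

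Concretely, I would construct a sequence $f\colon D\to D'$ of increasing R2 moves (together, if needed, with detour moves and R3 moves to arrange positioning) that inserts an auxiliary arc passing through a neighborhood of $a$, producing two new pairs of crossings $\{b_1,b_2\}$ and $\{c_1,c_2\}$. The picture is arranged so that in $D'$ each pair is an R2 bigon and, writing $a'=f_*(a)$, the triples $(a',b_1,c_1)$ and $(a',b_2,c_2)$ are both R3 triangles. Applying (P2) to the two bigons gives $P_{D'}(b_1)+P_{D'}(b_2)=0$ and $P_{D'}(c_1)+P_{D'}(c_2)=0$ in $\mathcal A(D')$, and applying (P3) to the two triangles gives $P_{D'}(a')+P_{D'}(b_1)+P_{D'}(c_1)=0$ and $P_{D'}(a')+P_{D'}(b_2)+P_{D'}(c_2)=0$. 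Adding the four relations in $\mathcal A(D')$ yields $2P_{D'}(a')=0$. Since $f$ is a composition of elementary morphisms whose induced partial bijections fix $a$, axiom (P0) guarantees $P_D(a)\in dom(\mathcal A(f))$ with $\mathcal A(f)(P_D(a))=P_{D'}(a')$; pulling back via the partial isomorphism $\mathcal A(f)$ gives $2P_D(a)=0$ in $\mathcal A(D)$, as required.

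The main obstacle is the local geometric construction: producing a picture in which $a$ is simultaneously the apex of two R3 triangles whose opposite edges are R2 bigons. This is a purely local question about the four arcs meeting at $a$ plus an auxiliary strand that can be brought in by detour moves (or created in place by Reidemeister moves), so it reduces to a model configuration already treated in the parity case of IMN. The functorial setting adds only the routine bookkeeping of the partial isomorphisms $\mathcal A(f_i)$ associated with each elementary step; verifying that all parities of interest lie in the appropriate domains throughout is immediate, since the morphisms used are R2, R3, and detour moves, none of which removes the crossings we track.
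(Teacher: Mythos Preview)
Your algebra and your handling of the partial isomorphisms are exactly right; the gap is purely geometric. The single diagram $D'$ you describe cannot exist. In any R3 triangle $(a',b_i,c_i)$ the three crossings lie on three strands $\alpha,\beta,\gamma$ with $a'=\alpha\cap\beta$, $b_i=\alpha\cap\gamma$, $c_i=\beta\cap\gamma$, and the triangle condition forces $b_i$ to be the crossing on $\alpha$ \emph{immediately adjacent} to $a'$. If both triangles are to be clean in the same diagram, then either $b_1,b_2$ lie on opposite sides of $a'$ along $\alpha$ --- in which case $a'$ separates them and $\{b_1,b_2\}$ is not an R2 bigon --- or they lie on the same side, in which case one of them obstructs the $\alpha$--edge of the other triangle. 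The same obstruction applies to $\{c_1,c_2\}$ on $\beta$. Detour moves do not help: R2 and R3 still require the relevant arcs to carry no classical crossings. So the ``model configuration already treated in IMN'' you invoke is not the one you wrote down; the four relations cannot all be read off one diagram.

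The paper's fix is to spread the relations over two diagrams and transport via (P0). First apply an R2 to reach $D_1$ in which the image $a_1$ of $a$ and a new crossing $b_1$ bound a bigon, giving $P_{D_1}(a_1)+P_{D_1}(b_1)=0$. Then pass (by two R2 moves and one R3) to a diagram $D_2$ in which a strand cuts across that former bigon at two crossings $c_2,d_2$, so that \emph{both} $(a_2,c_2,d_2)$ and $(b_2,c_2,d_2)$ admit R3; subtracting the two (P3) relations gives $P_{D_2}(a_2)=P_{D_2}(b_2)$. In $D_2$ the pair $a_2,b_2$ is no longer a bigon, but the bigon relation from $D_1$ survives under $\mathcal A(g)$ and yields $2P_{D_2}(a_2)=0$, which pulls back to $2P_D(a)=0$. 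Your framework already accommodates this argument; only the specific five-crossing picture needs replacing.
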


 \begin{proof}
Apply a second Reidemeister move $f$ to the diagram $D$ as shown in Fig.~\ref{fig:parity_mod2}. Then $a_1=f_*(a)$. Since one can apply a decreasing second Reidemeister move to the crossings $a_1$ and $b_1$, we have $P_{D_1}(a_1)+P_{D_1}(b_1)=0$. Apply two second Reidemeister moves and one third Reidemeister move to $D_1$ and get the diagram $D_2$. Then $a_2=g_*(a_1)$ and $b_2=g_*(b_1)$. Then
$$P_{D_2}(a_2)+P_{D_2}(b_2)=\mathcal A(g)(P_{D_1}(a_1))+\mathcal A(g)(P_{D_1}(b_1))=\mathcal A(g)(0)=0.$$

One can apply third Reidemeister moves to the triples $a_2, c_2, d_2$ and $b_2,c_2,d_2$. Hence
$P_{D_2}(a_2)+P_{D_2}(c_2)+P_{D_2}(d_2)=0$ and
$P_{D_2}(b_2)+P_{D_2}(c_2)+P_{D_2}(d_2)=0$. Hence, $P_{D_2}(a_2)=P_{D_2}(b_2)$
and $2P_{D_2}(a_2)=0$. Since $\mathcal A(f)$ and $\mathcal A(g)$ are partial isomorphisms we have $2P_{D_1}(a_1)=0$ and $2P_{D}(a)=0$.

 \begin{figure}
\centering\includegraphics[width=0.7\textwidth]{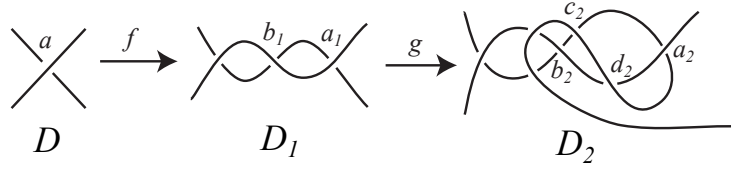} \caption{Proof of $2P_D(a)=0$} \label{fig:parity_mod2}
 \end{figure}
 \end{proof}

In~\cite{N} the definition of parity was modified in order to evade the relation proved above.

\begin{definition}\label{def:oriented_parity}
Let $G$ be a group (may be noncommutative). Assume that a knot $\mathcal K$ is oriented, so on the diagrams of the diagram category $\mathfrak K$ the induced orientation is given.

An {\em oriented parity} $p$ on the diagram category $\mathfrak K$ is a family of maps $p_D\colon \V(D)\to G$, $D\in Ob(\mathfrak K)$, that possesses the following properties:
\begin{itemize}
\item[(P0)] $p_{D}(v)=p_{D'}(f_*(v))$ for any elementary morphism $f\colon D\to D'$ and any crossing $v\in\V(D)$ in the domain of the partial bijection $f_*$;

\item[(P1)] if $f\colon D\to D'$ is a decreasing first Reidemeister move and $v\in\V(D)$ is the disappearing crossing then $p_D(v)=1$ where $1$ is the unit of the coefficient group $G$;

\item[(P2)] $p_D(v_1)p_D(v_2)=1$ when $f$ is a decreasing second Reidemeister move and $v_1,\,v_2$ are the disappearing crossings;

\item[(P3+)]
 if $f\colon D\to D'$ is a third Reidemeister move then
 $$p_D(v_1)^{\epsilon(v_1)}p_D(v_2)^{\epsilon(v_2)}p_D(v_3)^{\epsilon(v_3)}=1$$
  where $v_1,v_2,v_3$  are the vertices involved in the move and $\epsilon(v_i)$ is the incidence index of the vertex $v_i$ to the disappearing triangle, see Fig.~\ref{pic:incidence_index} left, and the order of the vertices is induced by the orientation of the surface the diagram lies in.

\begin {figure}[h]
\centering
\includegraphics[width=0.4\textwidth]{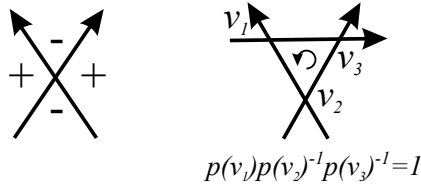}
\caption{Incidence index and relations of oriented parity}\label{pic:incidence_index}
\end {figure}
\end{itemize}
\end{definition}

In Fig.~\ref{pic:incidence_index} right one can see an example of a relation for an oriented parity. The motivation example for the property (P3+) is the homotopical parity (see Definition~\ref{def:homotopical_parity}).

\begin{remark}\label{rem:oriented_parity}
1. The properties (P1),(P2) and (P3+) can be unified in the following property: $\prod p_D(v_i)^{\epsilon(v_i)}=1$ if $f$ is a decreasing first or second Reidemeister move or a third Reidemeister move, $v_i$ are the crossings which take part in the move and
$\epsilon(v_i)$ is the incidence index of the vertex to the disappearing region.

2. Oriented parity does not depend on the orientation of the knot. More precise, if a family of maps $p$ is an oriented parity on the diagram category of the knot $\mathcal K$ then it is an oriented parity on the diagram category of the knot $-\mathcal K$ which is obtained from $\mathcal K$ by orientation reversion (we identify naturally the diagram categories of the knots $\mathcal K$ and $-\mathcal K$).

3. Oriented parities include parities with coefficients defined above because of equalities $2p(v)=0$.

4. For free knots, there is no canonical orientation of the triangle in a third Reidemeister move. Therefore, we take commutative coefficient groups when consider oriented parities on free knot.
\end{remark}

An example of oriented parity (with coefficients in $\Z$) is the index of crossings.

\begin{example}[Index parity]
 Let $D$ be a diagram of an oriented virtual knot $\mathcal K$ and $v\in\V(D)$ be a crossing of $D$. Let us calculate the crossing points on the left half $D^l_v$ of the diagram at the crossing $v$ with the signs as in Fig.~\ref{fig:classical_crossing_sign}. The sum is the {\em index parity} $ip_D(v)$ of the crossing $v$. Note that $ip_D(v)$ coincides with the intersection index of~\cite{Henrich} and differs by the sign of the crossing $v$ from the index $W_K(v)$ from~\cite{K2}: $ip_D(v)=sgn(v)\cdot W_K(v)$ (see also~\cite{Cheng}).

  \begin{figure}[h]
\centering\includegraphics[width=0.35\textwidth]{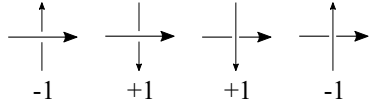}
\caption{Intersection number of a classical crossing}\label{fig:classical_crossing_sign}
\end{figure}

Since the index parity does not use under-overcrossing structure, this parity is defined for flat knots.
\end{example}

\begin{definition}\label{def:oriented_parity_functor}
An {\em oriented parity functor $P$ on the diagrams of an oriented knot $\mathcal{K}$} is a pair $(P, \mathcal G)$ where $\mathcal G$ is a functor from the category of its diagrams $\mathfrak K$ to the category $\mathfrak G_{part}$, and $P$ is a natural map between the vertex functor $\V$ and the coefficient functor $\mathcal G$, i.e. a set of maps $P_D\colon\V(D)\to \mathcal G(D)$ for each diagram $D$, such that for any elementary morphism $f\colon D\to D'$:

\begin{itemize}
\item[(P0)] for any crossing $v\in dom(f_*)\subset \V(D)$ one has $P_D(v)\in dom(\mathcal G(f))$ and $P_{D'}(f_*(v))=\mathcal G(f)(P_D(v))$;

\item[(P1)] $P_D(v)=1$ if $f$ is a decreasing first Reidemeister move and $v$ is the disappearing crossing;

\item[(P2)] $P_D(v_1)P_D(v_2)=1$ if $f$ is a decreasing second Reidemeister move and $v_1,\,v_2$ are the disappearing crossings;

\item[(P3+)] if $f\colon D\to D'$ is a third Reidemeister move then
$$P_D(v_1)^{\epsilon(v_1)}P_D(v_2)^{\epsilon(v_2)}P_D(v_3)^{\epsilon(v_3)}=1$$
  where $v_1,v_2,v_3$  are the vertices involved in the move and ordered counterclockwise, and $\epsilon(v_i)$ is the incidence index of the vertex $v_i$ in relation with the disappearing triangle.
\end{itemize}

 \end{definition}

 An oriented parity functor $(P,\mathcal G)$ is called {\em trivial} if for any $D\in Ob(\mathfrak K)$ and any $v\in\V(D)$ $P_D(v)=1$.

 Let us consider basic properties of oriented parity functors.

 \begin{proposition}
 Let $(P,\mathcal G)$ be an oriented parity functor on diagrams of a knot $\mathcal K$ and $D\in Ob(\mathfrak K)$ be a diagram of $\mathcal K$.

 1. Assume that crossings $v_1,v_2\in\V(D)$ form a bigon (Fig.~\ref{pic:bigon_trigon} left). Then $P_D(v_1)^{\epsilon(v_1)}P_D(v_2)^{\epsilon(v_2)}=1$ where $\epsilon(v_i)$ are the incidence indices to the bigon;
 2. Assume that crossings $v_1,v_2,v_3\in\V(D)$ form a triangle (Fig.~\ref{pic:bigon_trigon} right). Then $P_D(v_1)^{\epsilon(v_1)}P_D(v_2)^{\epsilon(v_2)}P_D(v_3)^{\epsilon(v_3)}=1$.

\begin {figure}[h]
\centering
\includegraphics[width=0.2\textwidth]{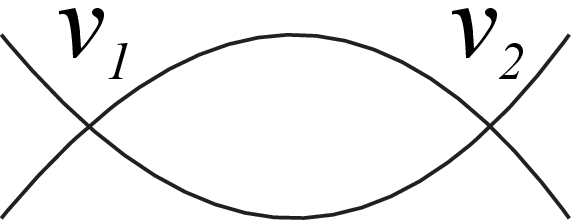}\qquad
\includegraphics[width=0.15\textwidth]{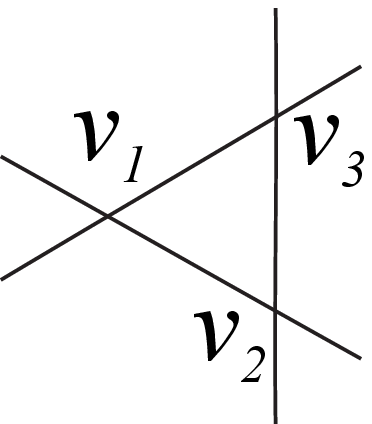}
\caption{A bigon and a triangle. The under-overcrossing structure can be any.}\label{pic:bigon_trigon}
\end {figure}
 \end{proposition}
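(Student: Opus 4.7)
The plan is to reduce both statements to the axioms (P2) and (P3+) of an oriented parity functor, possibly after auxiliary Reidemeister moves. In each part I first handle the case where the given bigon or triangle matches exactly the local picture of an R2 or R3 move, then extend to arbitrary over/undercrossing structure by manipulating the diagram.

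For part 1, when the over/under pattern at $\{v_1,v_2\}$ is that of a standard R2 bigon, there is a decreasing second Reidemeister move $f\colon D\to D'$ that removes both crossings. Axiom (P2) directly gives $P_D(v_1)P_D(v_2)=1$, and a short check shows that for this pattern both incidence indices $\epsilon(v_i)$ agree in sign, so the identity $P_D(v_1)^{\epsilon(v_1)}P_D(v_2)^{\epsilon(v_2)}=1$ follows. For the remaining bigons (same strand over at both crossings), the bigon is not R2-reducible. I would introduce an auxiliary short strand $\gamma$ near the bigon by an increasing R2 outside of it, then drag $\gamma$ across the bigon through a sequence of third Reidemeister moves until $v_1,v_2$ become an R2-bigon in a new diagram $D''$. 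Propagating parities by (P0), applying (P2) in $D''$, and applying (P3+) to each intermediate R3 with its own incidence indices, the auxiliary contributions telescope and one is left precisely with the bigon relation.

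For part 2, the strategy is analogous. If the triangle $\{v_1,v_2,v_3\}$ is itself an R3-triangle, (P3+) yields the identity at once. Otherwise I would introduce an auxiliary bigon adjacent to one side of the triangle by an increasing R2, then, using part 1 together with a controlled sequence of R3 moves, reroute one of the three strands so that the triangle becomes an R3-triangle in a new diagram $\widetilde D$. The (P3+) relation in $\widetilde D$, combined with the bigon relation from part 1 for the auxiliary pair and transported back by (P0), produces the desired triangle identity in $D$.

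The main obstacle, and the place where real care is needed, is the bookkeeping of the incidence indices $\epsilon(v_i)$ through the auxiliary moves. Since $\epsilon$ is determined by the orientation of the disappearing region at each vertex relative to the strand direction, the indices can flip as one slides an auxiliary strand across the bigon or triangle, and different over/under configurations give genuinely different case diagrams. The proof really comes down to a case analysis keyed on (a) the over/under pattern at the two or three vertices, and (b) the relative orientations of the strands bounding the disc, checking in each case that the incidence-index exponents combine with the relations from (P2) and (P3+) to give exactly the claimed product, with no residual factor.
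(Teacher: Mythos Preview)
Your overall strategy is sound, but there is a concrete error in part~1 for the alternating bigon. You propose to create an auxiliary strand $\gamma$ by an increasing R2 and then ``drag $\gamma$ across the bigon through a sequence of third Reidemeister moves until $v_1,v_2$ become an R2-bigon.'' This cannot happen: a third Reidemeister move never changes the over/under data at any of the three crossings involved, so after any sequence of R3 moves the pair $v_1,v_2$ is still alternating. Your telescoping idea could perhaps be rescued by arguing differently (collecting relations from the auxiliary triangles rather than ever applying (P2) to $v_1,v_2$ directly), but as stated the plan is broken, and the promised endpoint does not exist.

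The paper's argument avoids this entirely and is much lighter. First it observes that for \emph{any} bigon (not only an R2 one) the two incidence indices coincide, $\epsilon(v_1)=\epsilon(v_2)$, so the claim is just $P_D(v_1)P_D(v_2)=1$; you state this only for the R2 case. Then, for an alternating bigon, instead of an R2 it performs a single increasing \emph{first} Reidemeister move, poking a small loop into the bigon region and creating one new crossing $w$. The region bounded by $v_1,v_2,w$ is now an R3-triangle, so (P3+) gives $P_{D'}(v_1)^{\epsilon(v_1)}P_{D'}(v_2)^{\epsilon(v_2)}P_{D'}(w)^{\epsilon(w)}=1$, while (P1) gives $P_{D'}(w)=1$. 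Pulling back along the partial isomorphism $\mathcal G(f)$ finishes part~1 in one step, with no telescoping or case analysis on indices. For part~2 your idea is essentially the paper's: an increasing R2 adds $w,w'$ adjacent to the alternating triangle; the bigon relation just proved gives $P_{D'}(v_1)=P_{D'}(w)$ (via $P_{D'}(v_1)P_{D'}(w')=P_{D'}(w)P_{D'}(w')=1$), and $w,v_2,v_3$ is now an honest R3-triangle. So the moral is: use R1, not R2, for the alternating bigon, and note the $\epsilon(v_1)=\epsilon(v_2)$ fact up front; then the ``main obstacle'' you flag disappears.
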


\begin{proof}
1. Let $v_1,v_2$ be the vertices of a bigon in the diagram $D$. Then $\epsilon(v_1)=\epsilon(v_2)$, hence, the equation $$P_D(v_1)^{\epsilon(v_1)}P_D(v_2)^{\epsilon(v_2)}=1$$ is equivalent to $P_D(v_1)P_D(v_2)=1$. If a second Reidemeister move can be applied to the bigon, the equality holds due to the property(P2).

If the bigon is alternating then apply a first Reidemeister move $f$ as shown in Fig.~\ref{fig:alt_bigon}. With some abuse of notation, we will write $v_i$ for $f_*(v_i)\in\V(D')$, $i=1,2$. By the properties  (P1) and (P3+) we have
$$P_{D'}(v_1)^{\epsilon(v_1)}P_{D'}(v_2)^{\epsilon(v_2)}P_{D'}(w)^{\epsilon(w)}=1$$
 and $P_{D'}(w)=1$. Hence, $P_{D'}(v_1)^{\epsilon(v_1)}P_{D'}(v_2)^{\epsilon(v_2)}=1$. The left part is the image of $P_D(v_1)^{\epsilon(v_1)}P_D(v_2)^{\epsilon(v_2)}$ by the partial isomorphism $\mathcal G(f)$. Thus, $P_D(v_1)^{\epsilon(v_1)}P_D(v_2)^{\epsilon(v_2)}=1$.

\begin {figure}[h]
\centering
\includegraphics[width=0.4\textwidth]{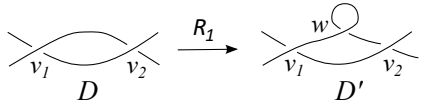}
\caption{Proof for an alternating bigon}\label{fig:alt_bigon}
\end {figure}

2. Let $v_1,v_2,v_3$ be the vertices of a triangle in the diagram $D$. If a third Reidemeister move can be applied to the triangle, the equality follows from the property (P3+).

\begin {figure}[h]
\centering
\includegraphics[width=0.5\textwidth]{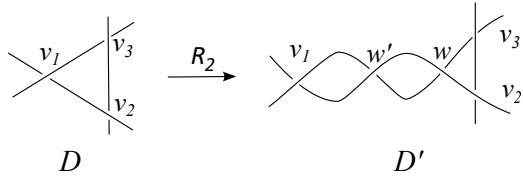}
\caption{Proof for an alternating triangle}\label{fig:alt_trigon}
\end {figure}

If $v_1,v_2,v_3$ form an alternating triangle, apply a second Reidemeister move $f\colon D\to D'$ as shown in Fig.~\ref{fig:alt_trigon}. Then $\epsilon(v_1)=\epsilon(w')=\epsilon(w)$. By the property (P2) and the previous result, $P_{D'}(v_1)P_{D'}(w')=P_{D'}(w)P_{D'}(w')=1$, hence $P_{D'}(v_1)=P_{D'}(w)$. By the property (P3) the equality, we have
$$P_{D'}(w)^{\epsilon(w)}P_{D'}(v_2)^{\epsilon(v_2)}P_{D'}(v_3)^{\epsilon(v_3)}=1.$$
Thus, $P_{D'}(v_1)^{\epsilon(v_1)}P_{D'}(v_2)^{\epsilon(v_2)}P_{D'}(v_3)^{\epsilon(v_3)}=1$ and $$P_D(v_1)^{\epsilon(v_1)}P_D(v_2)^{\epsilon(v_2)}P_D(v_3)^{\epsilon(v_3)}=1.$$
\end{proof}

\begin{definition}
Let $(P,\mathcal G)$ and $(P',\mathcal G')$ be oriented parity functors on the diagram category $\mathfrak K$ of some knot $\mathcal K$. A {\em homomorphism} between the parity functors is a family of homomorphisms $\Phi_D\colon \mathcal G(D)\to\mathcal G'(D)$ such that
\begin{enumerate}
  \item for any diagram $D\in Ob(\mathfrak K)$ $P'_D=\Phi_D\circ P_D$;
  \item for any elementary morphism $f\colon D\to D'$ one has the equality $\Phi_{D'}\circ\mathcal G(f)=\mathcal G'(f)\circ\Phi_D$ of homomorphisms from $dom(\mathcal G(f))$ to $\mathcal G'(D')$.
\end{enumerate}
\end{definition}

\begin{definition}
Let $(P,\mathcal G)$  be an oriented parity functor on the diagram category $\mathfrak K$ of some knot $\mathcal K$. We can asssociate  an oriented parity with the parity functor. To be more precise, let
$$
G^P=\bigst_{D\in Ob(\mathfrak K)} \mathcal G(D) / \langle x^{-1}\cdot\mathcal G(f)(x)\, |\, f\in Mor(\mathfrak K), x\in dom(\mathcal G(f))\rangle
$$
be the direct limit of coefficient groups of the parity functor, and $p^P_D\colon\V(D)\to G^P$ be the composition of $P_D$ and the natural homomorphism $\iota_D\colon \mathcal G(D)\to G^P$. Then the family $p^P_D$ is an oriented parity with coefficients in the group $G^P$, called the {\em associated parity to the parity functor} $(P,\mathcal G)$.
\end{definition}

 The associated parity possesses the following universal property.

\begin{proposition}\label{prop:universal_associated_parity}
Let $(P,\mathcal G)$  be an oriented parity functor on the diagram category $\mathfrak K$ of some knot $\mathcal K$, and $p^P$ be the oriented parity associated to $(P,\mathcal G)$. Then for any homomorphism $\Phi$ from the parity functor  $(P,\mathcal G)$ to an oriented parity $p$ with coefficients in some group $G$ there exists a unique group homomorphism $\phi\colon G^P\to G$ such that $\Phi_D=\phi\circ\iota_D$ for any $D\in Ob(\mathfrak K)$. In other word, any homomorphism from a parity functor $(P,\mathcal G)$ to a parity $p$  with coefficients passes through a homomorphism from the associated parity $p^P$ to $p$.
\end{proposition}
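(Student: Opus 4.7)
The proof is a direct application of the universal property of the direct limit (free product modulo the imposed relations). The plan is to first assemble the maps $\Phi_D\colon\mathcal G(D)\to G$ into a single homomorphism out of the free product, then check that it kills the defining relations of $G^P$, then read off existence and uniqueness.

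First I would invoke the universal property of the free product: since each $\Phi_D$ is a group homomorphism from $\mathcal G(D)$ to $G$, the family $\{\Phi_D\}_{D\in Ob(\mathfrak K)}$ induces a unique homomorphism
\[
\widetilde\phi\colon \bigst_{D\in Ob(\mathfrak K)} \mathcal G(D)\longrightarrow G
\]
whose restriction to each free factor $\mathcal G(D)$ is $\Phi_D$.

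Next I would verify that $\widetilde\phi$ annihilates every generator $x^{-1}\cdot\mathcal G(f)(x)$ of the normal subgroup defining $G^P$. This is exactly where the two conditions of being a homomorphism of parity functors enter. The oriented parity $p$ is viewed as a parity functor with constant coefficient functor $\mathcal G'\equiv G$ and $\mathcal G'(f)=\id_G$ for every morphism $f$; naturality of $\Phi$ then reads $\Phi_{D'}\circ \mathcal G(f)=\mathcal G'(f)\circ\Phi_D=\Phi_D$ on $dom(\mathcal G(f))$. Applied to $x\in dom(\mathcal G(f))$ this gives
\[
\widetilde\phi\bigl(x^{-1}\cdot\mathcal G(f)(x)\bigr)=\Phi_D(x)^{-1}\Phi_{D'}(\mathcal G(f)(x))=\Phi_D(x)^{-1}\Phi_D(x)=1,
\]
so $\widetilde\phi$ descends to a well-defined group homomorphism $\phi\colon G^P\to G$. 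By construction $\phi\circ\iota_D=\Phi_D$ for every diagram $D$, proving existence.

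For uniqueness I would observe that the natural maps $\iota_D$ collectively generate $G^P$: every element of $G^P$ is represented by a word in the factors $\mathcal G(D)$, hence lies in the subgroup generated by $\bigcup_D\iota_D(\mathcal G(D))$. Any homomorphism out of $G^P$ is therefore determined by its restrictions $\phi\circ\iota_D$, and the condition $\phi\circ\iota_D=\Phi_D$ pins $\phi$ down completely. No serious obstacle is expected; the only place requiring care is the bookkeeping that turns naturality of $\Phi$ (with codomain the constant functor $G$) into the exact relation needed to pass to the quotient, and this is handled in the displayed computation above.
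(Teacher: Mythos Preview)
The paper does not actually include a proof of this proposition; it is stated and then the exposition moves on, treating it as a routine consequence of the universal property of the quotient of a free product. Your argument is correct and is precisely the standard verification one supplies in this situation: assemble the $\Phi_D$ into a map out of the free product, check that the defining relations are annihilated using the naturality condition (which for the constant target functor $G$ reads $\Phi_{D'}\circ\mathcal G(f)=\Phi_D$), and deduce uniqueness from the fact that the images of the $\iota_D$ generate $G^P$.

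One small bookkeeping point worth making explicit: the normal subgroup defining $G^P$ is generated by $x^{-1}\cdot\mathcal G(f)(x)$ for \emph{all} morphisms $f\in Mor(\mathfrak K)$, whereas the naturality condition in the definition of a homomorphism of parity functors is imposed only for elementary morphisms. This is harmless, since every morphism is a finite composition of elementary ones and $\mathcal G$ is a functor, so a general relator $x^{-1}\cdot\mathcal G(f_n\circ\cdots\circ f_1)(x)$ factors as a product of elementary relators; alternatively, naturality for elementary $f$ propagates to compositions by a short induction. Either way your displayed computation covers the generating set, so $\widetilde\phi$ descends as claimed.
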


Note that if $p$ is an oriented parity on a diagram category, considered as a parity functor with a constant coefficient group, then the associated parity coincides with the original parity: $p^p=p$.

\subsection{Universal parity functor}

\begin{definition}
An oriented parity functor $(P^u, \mathcal G^u)$ on the diagrams of a knot $\mathcal K$ is called a {\em universal parity functor} if for any parity functor $(P,\mathcal G)$ on the diagrams of $\mathcal K$ there exists a unique family of homomorphisms
$$\phi_D\colon \mathcal G^u(D)\to\mathcal G(D),\ D\in Ob(\mathfrak K),$$
 such that $P_D=\phi_D\circ P^u_D$ for any $D\in Ob(\mathfrak K)$.
\end{definition}

\begin{theorem}\label{thm:universal_parity_functor}
Let $\mathcal K$ be a knot and $\mathfrak K$ be the category of its diagrams. Then there exists a unique (up to isomorphism) universal parity functor $(P^u, \mathcal G^u)$ on $\mathfrak K$.
\end{theorem}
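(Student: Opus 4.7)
The plan is to construct $(P^u,\mathcal{G}^u)$ explicitly, by presenting each coefficient group by generators and relations, and then to verify the universal property by induction on those relations. For each diagram $D\in\ob(\mathfrak{K})$ I would take $\mathcal{G}^u(D)=F_D/R_D$, where $F_D$ is the free group on formal symbols $[v]$ indexed by $v\in\V(D)$, and the normal subgroups $R_D\triangleleft F_D$ are to be defined, for all $D$ at once, as the smallest family of normal subgroups satisfying: (i) $[v]\in R_D$ whenever $v$ is the disappearing crossing of a decreasing first Reidemeister move applicable to $D$; (ii) $[v_1][v_2]\in R_D$ when $v_1,v_2\in\V(D)$ form the bigon of a decreasing second Reidemeister move; (iii) $[v_1]^{\epsilon_1}[v_2]^{\epsilon_2}[v_3]^{\epsilon_3}\in R_D$ when $v_1,v_2,v_3\in\V(D)$ form the triangle of a third Reidemeister move, ordered and indexed as in (P3+); and (iv) for every elementary morphism $f\colon D\to D'$ and every word $w$ in generators indexed by $dom(f_*)$, one has $w\in R_D$ if and only if $\widehat{f}(w)\in R_{D'}$, where $\widehat{f}(w)$ is obtained by replacing each $[v]$ with $[f_*(v)]$. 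Such a smallest family exists by intersection: the trivial family $R_D=F_D$ satisfies (i)--(iv), and the conditions are preserved under arbitrary intersections. I set $P^u_D(v)=[v]\bmod R_D$, and for each elementary $f\colon D\to D'$ I define $\mathcal{G}^u(f)$ to be the partial isomorphism with domain $\langle P^u_D(v):v\in dom(f_*)\rangle$ sending $P^u_D(v)\mapsto P^u_{D'}(f_*(v))$; the bidirectional form of (iv) is precisely what makes this a well-defined isomorphism onto the analogous subgroup of $\mathcal{G}^u(D')$. Non-elementary morphisms are handled by composition.

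The axioms (P0)--(P3+) for $(P^u,\mathcal{G}^u)$ hold by design, so only universality remains to check. Given any other oriented parity functor $(P,\mathcal{G})$, I would define $\phi_D\colon F_D\to\mathcal{G}(D)$ on generators by $[v]\mapsto P_D(v)$ and show it vanishes on $R_D$. By minimality of $R_D$ it suffices to show the family of kernels $(\ker\phi_D)_D$ satisfies the four closure conditions: (i)--(iii) follow directly from axioms (P1), (P2), (P3+) for $(P,\mathcal{G})$, while (iv) reduces to verifying that $\mathcal{G}(f)\circ\phi_D=\phi_{D'}\circ\widehat{f}$ on generators indexed by $dom(f_*)$, which is precisely axiom (P0) for $(P,\mathcal{G})$; since $\mathcal{G}(f)$ is a partial \emph{isomorphism} and hence injective on its domain, $\widehat{f}(w)\in\ker\phi_{D'}$ is equivalent to $w\in\ker\phi_D$. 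Hence $\phi_D$ descends to a homomorphism $\mathcal{G}^u(D)\to\mathcal{G}(D)$ with $\phi_D\circ P^u_D=P_D$, and the naturality identity $\mathcal{G}(f)\circ\phi_D=\phi_{D'}\circ\mathcal{G}^u(f)$ holds on generators and therefore on all of $dom(\mathcal{G}^u(f))$. Uniqueness of $\phi_D$ is immediate because the elements $P^u_D(v)$ generate $\mathcal{G}^u(D)$; uniqueness of $(P^u,\mathcal{G}^u)$ up to isomorphism is the usual formal consequence of any universal property.

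The conceptually delicate part of the argument is condition (iv). Without it, the groups $\mathcal{G}^u(D)$ would carry only ``local'' Reidemeister relations, but then the prescribed partial isomorphisms $\mathcal{G}^u(f)$ could fail to be isomorphisms, because a Reidemeister relation visible in $D'$ among crossings of $\im(f_*)$ might have no counterpart among the generators of $\mathcal{G}^u(D)$ indexed by $dom(f_*)$. The bidirectional formulation of (iv) saturates all presentations simultaneously and is exactly what is needed on both sides of the argument: it forces $\mathcal{G}^u(f)$ to be a genuine isomorphism onto its image, and at the same time it is the property that lets an arbitrary target $(P,\mathcal{G})$ lift every transported relation through its own $\mathcal{G}(f)$ via (P0). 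Handling the resulting mutual recursion across all diagrams is the main obstacle, and the intersection argument above is the clean way through it.
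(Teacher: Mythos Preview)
Your proof is correct and follows essentially the same approach as the paper: construct $\mathcal{G}^u(D)$ as the free group on $\V(D)$ modulo the smallest compatible family of normal subgroups containing the Reidemeister relations, then verify universality by showing that the kernels of any target parity functor form such a family. The only difference is presentational---you obtain the minimal family by intersecting from above, while the paper builds it as an increasing union $H_\infty(D)=\bigcup_i H_i(D)$ from below---but the resulting objects and the logic of the universality check are identical.
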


\begin{proof}
The uniqueness follows from the universal property of the universal parity functor. Let us give a construction of such a functor.

For any $D\in\mathfrak K$ denote $F(D)=\langle\V(D)\rangle$ the free group generated by the set of crossings $\V(D)$ of the diagram $D$. For any $f\colon D\to D'$ define a partial isomophism $\mathcal F(f)$ from $F(D)$ to $F(D')$ with the domain $\langle dom(f_*)\rangle\subset F(D)$ and the codomain $\langle im(f_*)\rangle\subset F(D')$ by the formula $\mathcal F(f)(v)=f_*(v)$, $v\in dom(f_*)\subset \V(D)$.

Consider a sequence of increasing normal subgroups of $F(D)$ defined by induction. Let $H_0(D)\lhd F(D)$ be the normal subgroup generated by the relations (P1), (P2) and (P3+). Given the subgroups $H_i(D)$, we define the subgroup $H_{i+1}(D)$ as the normal closure of the set
$$H_i(D)\cup \bigcup_{f\colon D\to D'\mbox{ \scriptsize is elementary}} (\mathcal F(f))^{-1}(H_i(D'))$$ in $F(D)$. Finally, let $H_\infty(D)=\bigcup_{i=1}^\infty H_i(D)$ and $\mathcal G^u(D)=F(D)/H_\infty(D)$. The map $P^u_D$ is the composition $\V(D)\hookrightarrow F(D)\to\mathcal G^u(D)$. For any elementary morphism $f\colon D\to D'$ the partial isomorphism of $\mathcal G^u(f)$ is induced by the map $\mathcal F(f)$.

Let us check the properties of a universal oriented parity functor. The properties (P1),(P2) and (P3+) hold because $H_0\subset H_\infty=\ker P^u_D$.

By definition $\bigcup_{f\colon D\to D'} (\mathcal F(f))^{-1}(H_\infty(D'))\subset H_\infty(D)$. In particular, if $f\colon D\to D'$ is an elementary morphism then
$$(\mathcal F(f))^{-1}(H_\infty(D'))=(\mathcal F(f))^{-1}(H_\infty(D')\cap im(\mathcal F(f)))\subset H_\infty(D).$$ Analogously,
$$(\mathcal F(f^{-1}))^{-1}(H_\infty(D))=\mathcal F(f)(H_\infty(D)\cap dom(\mathcal F(f)))\subset H_\infty(D').$$ Then $\mathcal F(f)$ establishes an isomorphisms of groups
$H_\infty(D)\cap dom(\mathcal F(f))$ and $H_\infty(D')\cap im(\mathcal F(f))$. Hence, the induced map $\mathcal G^u(f)$ is a well defined partial isomorphism with the domain
$$dom(\mathcal G^u(f))=dom(\mathcal F(f))/(H_\infty(D)\cap dom(\mathcal F(f)))$$
and the codomain
$$im(\mathcal G^u(f))=im(\mathcal F(f))/(H_\infty(D')\cap im(\mathcal F(f))).$$
The property (P0) follows from the definition of $\mathcal F(f)$.

Let us check the universal property. Let $(P,\mathcal G)$ be an oriented parity functor. For any $D\in Ob(\mathfrak K)$ the map $P_D\colon\V(D)\to \mathcal G(D)$ induces a homomorphism $\tilde\phi_D\colon F(D)\to \mathcal G(D)$. Let $\Pi_D=\ker\tilde\phi_D\subset F(D)$ be the kernel of $\tilde\phi_D$. We need to show that $\tilde\phi_D$ induces a correct homomorphism $\phi_D\colon \mathcal G^u(D)\to \mathcal G(D)$, i.e. that $H_\infty(D)\subset \Pi(D)$.

Since the parity functor $(P,\mathcal G)$ obeys the properties (P1),(P2),(P3+), the kernel $\Pi(D)$ contains $H_0(D)$.

Let $f\colon D\to D'$ be an elementary morphism. The property (P0) implies that $\tilde\phi_D(dom(\mathcal F(f)))\subset dom(\mathcal G(f))$ and $\tilde\phi_{D'}(im(\mathcal F(f)))\subset im(\mathcal G(f))$, and $\mathcal G(f)\circ\tilde\phi_D=\tilde\phi_{D'}\circ\mathcal F(f)$ because $\mathcal G(f)$ is a partial isomorphism. Then $(\mathcal F(f))^{-1}(\Pi(D'))\subset\Pi(D)$. Hence,

$$\bigcup_{f\colon D\to D'\mbox{ \scriptsize is elementary}} (\mathcal F(f))^{-1}(\Pi(D'))\subset \Pi(D).$$

Assume that $H_i(D)\subset \Pi(D)$ for all diagrams $D\in Ob(\mathfrak K)$. Then
$$\bigcup_{f\colon D\to D'} (\mathcal F(f))^{-1}(H_i(D'))\subset \bigcup_{f\colon D\to D'} (\mathcal F(f))^{-1}(\Pi(D'))\subset \Pi(D).$$
Therefore, $H_{i+1}(D)\subset \Pi(D)$.
Thus, $H_i(D)\subset\Pi(D)$ for all $i$, and $H_\infty(D)\subset\Pi(D)$.

\end{proof}

\begin{remark}
The oriented parity $p^{P^u}$ associated with the universal parity functor $(P^u,\mathcal G^u)$ have the following universal property: for any oriented parity $p$ with coefficients in some group $G$ there exists a unique homomorphism $\phi\colon \mathcal (G^u)^{P^u}\to G$ such that for any diagram $D\in Ob(\mathfrak K)$ the parity map $p_D$ is the composition $\phi\circ p^{P^u}_D$. The associated parity $p^u=p^{P^u}$ with coefficients in $G^u=(G^u)^{P^u}$ is called the {\em universal oriented parity} on the diagram category $\mathfrak K$.
\end{remark}

\begin{definition}
An oriented parity functor $(P, \mathcal G)$ is {\em reduced} if for any diagram $D\in Ob(\mathfrak K)$ the set $\{P_D(v)\,|\, v\in\V(D)\}$ generates the coefficient group $\mathcal G(D)$.

Given an oriented parity functor $(P, \mathcal G)$, we define its {\em reduction} as the pair $(\bar P,\bar{\mathcal G})$ where for any $D\in Ob(\mathfrak K)$
$\bar{\mathcal G}(D)=\langle P_D(v)\,|\, v\in\V(D)\rangle\subset \mathcal G(D)$ and $\bar P_D$ is the corestriction of $P_D$ to the map from $\V(D)$ to $\bar{\mathcal G}(D)$.

The {\em canonical reduction} $(\tilde P, \tilde{\mathcal G})$ of an oriented parity functor $(P, \mathcal G)$ is defined as follows. For any diagram $D\in Ob(\mathfrak K)$, let $\langle\V(D)\rangle$ be the free group generated by the set $\V(D)$ and $P_D\colon \langle\V(D)\rangle\to \mathcal G(D)$ be the induced group homomorphism. We set $\tilde{\mathcal G}(D)=\langle\V(D)\rangle/\ker P_D$ to be the coimage of $P_D$, and $\tilde P_D\colon\V(D)\to\tilde{\mathcal G}(D)$ to be the composition $\V(D)\hookrightarrow\langle\V(D)\rangle\to\tilde{\mathcal G}(D)$.

Note that the reduction $(\bar P,\bar{\mathcal G})$ and the canonical reduction $(\tilde P, \tilde{\mathcal G})$ are isomorphic parity functors.
\end{definition}

Reduction means we exclude all inessential parts of the coefficient groups of the parity functor.

\begin{proposition}\label{prop:associated_reduction_parity}
Let $p$ be an oriented parity on a diagram category $\mathfrak K$ with coefficients in a group $G$. Let $(P,\mathcal G)$ be the reduction of the parity $p$, and $p'$ be the associated parity to $(P,\mathcal G)$. Then the coefficient group of $p'$ can be identified with the subgroup $G'\subset G$ generated by values $p_D(v), D\in Ob(\mathfrak K), v\in\V(D)$, and $p'$ is the corestriction of $p$ to $G'$.
\end{proposition}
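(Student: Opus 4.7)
The plan is to invoke Proposition \ref{prop:universal_associated_parity} (universality of the associated parity) to produce a canonical homomorphism $\phi\colon G^P\to G'$, and then to show it is an isomorphism. Let $\bar p$ denote the corestriction of $p$ to $G'$; this is an oriented parity with coefficients in $G'\subset G$. The subgroup inclusions $\Phi_D\colon\mathcal G(D)\hookrightarrow G'$ assemble into a homomorphism of parity functors $(P,\mathcal G)\to\bar p$: the relation $\bar p_D=\Phi_D\circ P_D$ is trivial, and the naturality $\Phi_{D'}\circ\mathcal G(f)=\Phi_D$ on $\mathrm{dom}(\mathcal G(f))$ is automatic because, for the reduction of a parity with constant coefficients, every $\mathcal G(f)$ is just the restriction of $\id_G$ to a subgroup. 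Proposition \ref{prop:universal_associated_parity} then supplies a unique group homomorphism $\phi\colon G^P\to G'$ with $\bar p_D=\phi\circ p'_D$, and surjectivity is immediate since $G'$ is generated by $p_D(v)=\phi(p'_D(v))$.

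For injectivity the key input is a common enlargement lemma: for any finite collection of diagrams $D_1,\ldots,D_n$ of $\mathcal K$ there exist $D^*\in Ob(\mathfrak K)$ and non-destructive morphisms $f_i\colon D_i\to D^*$, meaning $\V(D_i)\subset\mathrm{dom}(f_{i*})$. I would prove this by induction on $n$, the pair case following from the standard normalization of Reidemeister sequences: any morphism $D_1\to D_2$ can be rewritten as a composition of a non-destructive part (isotopies, $R3$, $R1+$, $R2+$) from $D_1$ to an intermediate diagram $D^*$, followed by a non-creative part ($R1-$, $R2-$, together with isotopies and $R3$) from $D^*$ to $D_2$; reversing the second part then yields a non-destructive morphism $D_2\to D^*$ as well. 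Given the lemma, an arbitrary $w\in G^P$ written as $\prod_i\iota_{D_i}(x_i)$ with $x_i\in\mathcal G(D_i)$ can be collapsed by first choosing a common $D^*$ for the finite set of diagrams involved, and then using the defining relations $\iota_{D_i}(x_i)=\iota_{D^*}(\mathcal G(f_i)(x_i))$ in $G^P$ (applicable because $f_i$ non-destructive gives $\mathrm{dom}(\mathcal G(f_i))=\mathcal G(D_i)$) to rewrite $w=\iota_{D^*}(y)$ for some $y\in\mathcal G(D^*)$.

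Since $\phi\circ\iota_{D^*}$ is simply the inclusion $\mathcal G(D^*)\hookrightarrow G'\subset G$, we obtain $\phi(w)=y$; hence $\phi(w)=1$ forces $y=1$ in $G$ and thus in $\mathcal G(D^*)$, giving $w=\iota_{D^*}(1)=1$ in $G^P$. Thus $\phi\colon G^P\to G'$ is an isomorphism, and under this identification $p'_D(v)=\iota_D(P_D(v))$ corresponds to $\phi(\iota_D(P_D(v)))=P_D(v)=p_D(v)\in G'$, so $p'$ is precisely the corestriction $\bar p$. The main obstacle is the common enlargement lemma: once the normalization of Reidemeister sequences is granted, everything else is a formal consequence of the universal property of the associated parity together with the fact that all subgroups $\mathcal G(D)$ lie inside a single ambient group $G$.
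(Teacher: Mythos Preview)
Your proof is correct and considerably more detailed than the paper's, which occupies three sentences: it simply observes that each $\mathcal G(D)$ is the subgroup $\langle p_D(v)\mid v\in\V(D)\rangle\subset G$ and that every partial isomorphism $\mathcal G(f)$ is the identity on its domain, and from this alone asserts that the associated coefficient group equals $G'$. No injectivity argument is offered; the paper treats the step from ``system of subgroups of $G$ with identity transition maps'' to ``colimit $=G'$'' as self-evident.

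Your approach fills exactly this gap. The detour through Proposition~\ref{prop:universal_associated_parity} is natural but inessential --- the map $\phi\colon G^P\to G'$ is already visibly induced by the inclusions $\mathcal G(D)\hookrightarrow G$, so one could bypass the universal property --- whereas the common enlargement lemma is the real content. Without some form of directedness the implication the paper uses can fail: for a non-directed system of subgroups of $G$ glued along partial identities, the free product modulo the identification relations need not embed in $G$ (for instance $2\Z$ and $3\Z$ inside $\Z$, glued only along $6\Z$, yield $\langle x,y\mid x^3=y^2\rangle$ rather than $\Z$). Your lemma supplies precisely the directedness required to collapse any word in $G^P$ into a single $\iota_{D^*}(y)$.

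One caution: the Reidemeister normalization you invoke --- rewriting an arbitrary morphism as non-destructive followed by non-creative --- is plausible in each of the knot theories listed in Definition~\ref{def:knot_category}, but it is not entirely routine. Commuting a decreasing move past a subsequent move that uses the region the decrease opened typically requires inserting extra $R3$ moves and isotopies, and the induction step passing from two diagrams to $n$ also deserves a sentence. If you want the proof to be self-contained, this lemma merits its own short argument or a reference; as written you have correctly flagged it as the main obstacle.
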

\begin{proof}
For any diagram $D\in Ob(\mathfrak K)$ the coefficient group $\mathcal G(D)$ is the subgroup $\langle p_D(v)\,|\, v\in\V(D)\rangle\subset G$, and the partial isomorphisms  $\mathcal G(f)$ are the identity maps on the domain. Hence, the associated coefficient group is the subgroup of $G$ generated by the subgroups $\mathcal G(D)$, i.e. the subgroup $G'$.
\end{proof}

\begin{proposition}
Universal parity functor in any knot theory is reduced.
\end{proposition}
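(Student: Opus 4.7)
The plan is to apply the uniqueness clause of the universal property of $(P^u,\mathcal G^u)$ twice: once to produce a retraction onto its reduction, and once to force that retraction to be the identity.

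Let $(P^u,\mathcal G^u)$ be the universal oriented parity functor on $\mathfrak K$ and let $(\bar P^u,\bar{\mathcal G}^u)$ be its reduction, i.e.\ $\bar{\mathcal G}^u(D)=\langle P^u_D(v)\mid v\in\V(D)\rangle\subset \mathcal G^u(D)$ with $\bar P^u_D$ the corestriction of $P^u_D$. My first step is to verify that $(\bar P^u,\bar{\mathcal G}^u)$ is itself an oriented parity functor. For each elementary morphism $f\colon D\to D'$ I would define $\bar{\mathcal G}^u(f)$ as the restriction of $\mathcal G^u(f)$ to $\bar{\mathcal G}^u(D)\cap dom(\mathcal G^u(f))$. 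Property (P0) for $(P^u,\mathcal G^u)$ gives $\mathcal G^u(f)(P^u_D(v))=P^u_{D'}(f_*(v))\in\bar{\mathcal G}^u(D')$ for every $v\in dom(f_*)$, so $\mathcal G^u(f)$ sends the generators of $\bar{\mathcal G}^u(D)\cap dom(\mathcal G^u(f))$ into $\bar{\mathcal G}^u(D')$; applying the same argument to $f^{-1}$ on the codomain side shows this restriction is a partial isomorphism. Functoriality and the axioms (P0)--(P3+) for $(\bar P^u,\bar{\mathcal G}^u)$ are then inherited from $(P^u,\mathcal G^u)$.

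Next, by universality of $(P^u,\mathcal G^u)$ applied to the parity functor $(\bar P^u,\bar{\mathcal G}^u)$, there is a unique family of homomorphisms $\phi_D\colon \mathcal G^u(D)\to\bar{\mathcal G}^u(D)$ with $\bar P^u_D=\phi_D\circ P^u_D$ for every $D\in Ob(\mathfrak K)$. Let $\iota_D\colon\bar{\mathcal G}^u(D)\hookrightarrow \mathcal G^u(D)$ be the inclusion. Then the composed family $\{\iota_D\circ\phi_D\}$ satisfies $(\iota_D\circ\phi_D)\circ P^u_D=\iota_D\circ\bar P^u_D=P^u_D$, and the family $\{\id_{\mathcal G^u(D)}\}$ trivially has the same property. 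Applying the uniqueness clause of the universal property of $(P^u,\mathcal G^u)$ to itself, these two families must coincide, whence $\iota_D\circ\phi_D=\id_{\mathcal G^u(D)}$ for every $D$. Since $\iota_D$ is injective by construction and surjective by the identity just established, it is an isomorphism, so $\bar{\mathcal G}^u(D)=\mathcal G^u(D)$ and $(P^u,\mathcal G^u)$ is reduced.

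The only real step of substance is the first one, verifying that the naive restriction of $\mathcal G^u$ to the subgroups generated by crossings really is a subfunctor into $\mathfrak G_{part}$; I expect this to be the main place where one has to be careful, but as sketched above it follows directly from naturality (P0) together with the observation that the axioms (P1), (P2), (P3+) are relations among elements of the form $P^u_D(v)$ and hence automatically hold inside $\bar{\mathcal G}^u(D)$.
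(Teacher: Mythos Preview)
Your argument is correct in outline and is a genuinely different route from the paper's. The paper's proof is a single line: since in the construction of Theorem~\ref{thm:universal_parity_functor} one has $\mathcal G^u(D)=F(D)/H_\infty(D)$ with $F(D)=\langle\V(D)\rangle$ the free group on the crossing set, the images of the crossings tautologically generate $\mathcal G^u(D)$, so the functor is reduced by definition. Your proof, by contrast, never looks at the construction; it is the standard categorical retraction argument showing that \emph{any} object with the stated universal property is reduced. What this buys is that reducedness is seen to be intrinsic to the universal property rather than an artifact of one particular model; what the paper's argument buys is brevity.

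One technical point at exactly the place you flagged. Your proposed domain $\bar{\mathcal G}^u(D)\cap dom(\mathcal G^u(f))$ need not be generated by the elements $P^u_D(v)$ with $v\in dom(f_*)$: generators $P^u_D(v)$ with $v\notin dom(f_*)$ may contribute to the intersection in ways you cannot track, so the sentence ``$\mathcal G^u(f)$ sends the generators of $\bar{\mathcal G}^u(D)\cap dom(\mathcal G^u(f))$ into $\bar{\mathcal G}^u(D')$'' is not justified as written. The easy fix is to take instead the smaller domain $\langle P^u_D(v)\mid v\in dom(f_*)\rangle$, which lies in both $\bar{\mathcal G}^u(D)$ and $dom(\mathcal G^u(f))$ and is visibly carried by $\mathcal G^u(f)$ onto $\langle P^u_{D'}(f_*(v))\mid v\in dom(f_*)\rangle\subset\bar{\mathcal G}^u(D')$; this already contains everything (P0) requires, and (P1)--(P3+) are inherited verbatim. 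With that adjustment your retraction argument goes through cleanly.
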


\begin{proof}
The statement follows from the construction of the universal parity functor in Theorem~\ref{thm:universal_parity_functor}.
\end{proof}


\section{Oriented Gaussian parity functor}

In this section we construct the universal oriented parity functor for the diagrams of a free knot. Below we consider free knot diagrams as chord diagrams.

Let $\mathcal K$ be a free knot and $D$ be its diagram, i.e. a chord diagram. We denote crossings (chords) of the diagram with small letters $a,b,\dots$  and the ends of the chord with letters $a',a'',b',b'',\dots$. A {\em polygon} in a diagram $D$ is a sequence $a_1'a_1''a_2'a_2''\dots a_k'a_k''$ of chord ends such that $a_i'$ and $a_i''$, $i=1,\dots,k$, are the ends of one chord $a_i$, and the points $a_i''$ and $a_{i+1}'$ are adjacent points on the diagram circle (i.e. there is no chord end inside the segment $a_i''a_{i+1}'$).

Let $(P,\G)$ be an oriented parity functor on the diagram category $\mathfrak K$ of the free knot $\mathcal K$. As it was mentioned in Remark~\ref{rem:oriented_parity}, we assume that coefficient groups $\G(D)$ are commutative.

\begin{lemma}\label{lem:free_parity_polygon}
For any polygon $\pi=a_1'a_1''a_2'a_2''\dots a_k'a_k''$ in a free
knot diagram $D$ and any oriented parity functor $P$ one has the
identity
$$
\sum_{i=1}^k\eps_\pi(a_i)P_D(a_i)=0
$$
where $\eps_\pi(a_i)=1$ if the segments $a_{i-1}''a_i'$ and
$a_i''a_{i+1}'$ have the same orientation, and $\eps_\pi(a_i)=-1$ if
the segments have opposite orientations.
\end{lemma}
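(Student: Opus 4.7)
The plan is to prove the polygon identity by induction on $k$, using the relations of an oriented parity functor as base cases and an $R2$ splitting for the inductive step.

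For the base cases $k\le 3$, note that a polygon of size $k=1$ is a single chord whose two ends are adjacent on the diagram circle, so the corresponding crossing admits a decreasing first Reidemeister move and $P_D(a_1)=0$ by $(P1)$. A polygon of size $k=2$ presents two chords forming a bigon face of the underlying $4$-graph, and the identity is exactly the bigon part of the proposition already proved (in additive notation, since coefficient groups are commutative here). A polygon of size $k=3$ is a triangle face, whose identity is the triangle part of the same proposition.

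For the inductive step with $k\ge 4$, I would choose indices $i<j$ in $\{1,\ldots,k\}$ with $2\le j-i\le k-2$ and apply an $R2$ move $f\colon D\to D'$ introducing two parallel chords $c_1, c_2$, whose endpoints split the arcs $a_i''a_{i+1}'$ and $a_j''a_{j+1}'$ of $\pi$. In $D'$ one then sees two polygons: $\pi_1$ consisting of $a_{i+1},\ldots,a_j$ together with one of the new chords, and $\pi_2$ consisting of $a_{j+1},\ldots,a_k,a_1,\ldots,a_i$ together with the other new chord. Both $\pi_1$ and $\pi_2$ have size strictly less than $k$. By the induction hypothesis the polygon identity holds for $\pi_1$ and $\pi_2$ in $D'$. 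Adding these two identities, transporting each $P_{D'}(a_s)$ back to $P_D(a_s)$ by $(P0)$ along the partial isomorphism $\mathcal{G}(f)$, and using $(P2)$ to cancel $P_{D'}(c_1)+P_{D'}(c_2)$, produces the desired identity for $\pi$ in $D$.

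The main obstacle is the sign bookkeeping. One has to select the placement of $c_1,c_2$ so that, in the two sub-polygons, the sign attached to every surviving chord $a_s$ equals $\eps_\pi(a_s)$, while the signs attached to $c_1$ and $c_2$ are equal, so that their combined contribution is $\eps\bigl(P_{D'}(c_1)+P_{D'}(c_2)\bigr)$ and vanishes by $(P2)$. The verification is local: the arcs in $\pi_1$ and $\pi_2$ that sit inside old arcs of $\pi$ inherit the same orientation from the diagram circle, so the signs at each $a_s$ are unchanged; the two halves of the arc $a_i''a_{i+1}'$ (resp.\ $a_j''a_{j+1}'$) in which $c_1$ and $c_2$ sit carry the same circle orientation, which forces $\eps^{\pi_1}(c)=\eps^{\pi_2}(c')$ for the two new chords. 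Commutativity of the coefficient groups (noted earlier for free knots) is what lets us freely rearrange the sum of the two smaller polygon identities.
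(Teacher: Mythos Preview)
Your proof is correct and follows essentially the same strategy as the paper: induction on $k$, with the base cases $k\le 3$ reducing to the parity axioms, and the inductive step performed by an increasing $R2$ move whose two new chords split the polygon into two smaller ones; the sign check that $\eps_{\pi_1}(c_1)=\eps_{\pi_2}(c_2)$ and that the surviving $\eps_\pi(a_s)$ are preserved is exactly the point the paper verifies. The only difference is cosmetic: the paper always splits off a triangle (placing the new chord ends in the arcs $a_k''a_1'$ and $a_2''a_3'$), whereas you allow an arbitrary split with $2\le j-i\le k-2$, and for $k=2,3$ the paper appeals directly to (P2) and (P3+) rather than to the bigon/triangle proposition.
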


 \begin{figure}
\centering\includegraphics[width=0.75\textwidth]{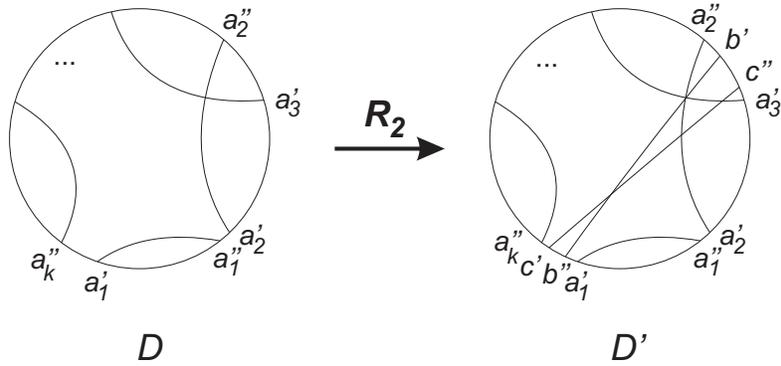}
\caption{Polygon identity: induction step}
\label{fig:free_parity_polygon}
 \end{figure}

\begin{proof}
We use induction on $k$. The cases $k=1,2,3$ are equivalent to the
parity properties (P1), (P2) and (P3+).

Let $k>3$. Add points $c'$ and $b''$ in the segment $a_k''a_1'$ and
add points $b'$ and $c''$ in the segment $a_2''a_3'$. Apply a second
Reidemeister move to the diagram $D$ by adding the cord $b$ with the
ends $b', b''$ and the cord $c$ with the endpoints $c', c''$. Denote
the new diagram $D'$ (see Fig.~\ref{fig:free_parity_polygon}). The
polygon $\pi$ splits into the triangle $\pi'=a_1'a_1''a_2'a_2''b'b''$
and the polygon $\pi''=c'c''a_3'a_3''\dots a_k'a_k''$. By induction we
have
$\eps_{\pi''}(c)P_{D'}(c)+\sum_{i=3}^k\eps_{\pi''}(a_i)P_{D'}(a_i)=0$
and
$\eps_{\pi'}(a_1)P_{D'}(a_1)+\eps_{\pi'}(a_2)P_{D'}(a_2)+\eps_{\pi'}(b)P_{D'}(b)=0$.
Since $\eps_{\pi'}(a_i)=\eps_{\pi}(a_i)$, $i=1,2$,
$\eps_{\pi''}(a_i)=\eps_{\pi}(a_i)$, $i=3,\dots, k$ and
$\eps_{\pi'}(b)=\eps_{\pi''}(c)$ and $P_{D'}(b)+P_{D'}(c)=0$, then
$\sum_{i=1}^k\eps_\pi(a_i)P_{D'}(a_i)=0.$ Hence
$\sum_{i=1}^k\eps_\pi(a_i)P_{D}(a_i)=0.$
\end{proof}

\begin{lemma}\label{lem:free_parity_4P}
Let $a$ be a chord in a free knot diagram $D$. Then for any oriented
parity functor $P$ we have $4P_D(a)=0$.
\end{lemma}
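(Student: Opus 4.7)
Plan: I would prove the lemma by combining the polygon identity of Lemma~\ref{lem:free_parity_polygon} with a modification of the diagram by auxiliary second Reidemeister moves. The overall strategy parallels the proof of Proposition~\ref{prop:parity_mod2} (which gave $2P_D(a)=0$ in the unoriented setting), but the signs in (P3+) obstruct the direct cancellation, which is why we end up with $4P_D(a)=0$ instead of $2P_D(a)=0$.

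First I would apply two second Reidemeister moves to $D$ to introduce two pairs of auxiliary chords, say $(b,c)$ and $(d,e)$, placed near the two endpoints of the chord $a$. By property (P2) each pair satisfies the bigon relation
\[
P(b)+P(c)=0,\qquad P(d)+P(e)=0,
\]
and the resulting diagram $D'$ contains $a$ in the middle of a richer configuration in which several new polygons pass through $a$.

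Next I would exhibit two polygons $\pi_1$ and $\pi_2$ in $D'$, each containing $a$ together with some subset of $\{b,c,d,e\}$. Applying Lemma~\ref{lem:free_parity_polygon} to each yields a relation in the (abelian) coefficient group of the form
\[
\eps_{\pi_j}(a)P_{D'}(a)+(\text{terms in }b,c,d,e)=0,\qquad j=1,2.
\]
The polygons should be chosen so that, after summing the two identities and substituting via the bigon relations above, all auxiliary terms cancel and only a multiple of $P_{D'}(a)$ survives, with coefficient $4$. Because $\mathcal G(f)$ is a partial isomorphism for each elementary move, pulling the identity back to $D$ yields $4P_D(a)=0$.

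The main obstacle is the combinatorial choice of the auxiliary configuration and the polygons $\pi_1,\pi_2$, and the careful tracking of the incidence signs $\eps_{\pi_j}$: it is precisely the sign discrepancy forced by (P3+) that prevents the naive argument from giving $2P(a)=0$ and that, when used twice with opposite sign patterns, yields the doubled coefficient $4$. Once the correct configuration is fixed, the remainder of the proof is an additive computation in the commutative group $\mathcal G(D)$.
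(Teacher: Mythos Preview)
Your overall strategy---pass to an auxiliary diagram by second Reidemeister moves and combine polygon identities from Lemma~\ref{lem:free_parity_polygon}---is the right one, and matches the paper's approach in spirit. However, the concrete plan you describe cannot work as stated. In a polygon each chord appears exactly once with sign $\pm 1$, so if you take only two polygons $\pi_1,\pi_2$ in a single diagram $D'$, each containing $a$ once, the sum of the two identities can contribute at most $\pm 2\,P_{D'}(a)$; no choice of auxiliary chords and bigon substitutions will push that coefficient to $4$. Thus two polygons in one extended diagram are not enough to produce the relation $4P_D(a)=0$.

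The paper's argument is more indirect and uses more structure. One first passes to an intermediate diagram $D_1$ by adding two R2 pairs $b,b'$ and $c,c'$. Then one builds \emph{two different} further extensions $D_2$ and $D_3$ of $D_1$ (each by an additional R2 pair, placed on opposite sides). In $D_2$ the polygons $bad$ and $bcad$ give $P(c)=2P(a)+2P(b)$; in $D_3$ the polygons $bae$ and $bcae$ give $P(c)=-2P(a)+2P(b)$. Both equalities pull back to $\mathcal G(D_1)$, and comparing them yields $4P_{D_1}(a)=0$, hence $4P_D(a)=0$. The point you are missing is that the coefficient $4$ does not arise from a single diagram; it comes from obtaining two inequivalent expressions for the \emph{same} auxiliary parity $P_{D_1}(c)$ via two different extensions, whose difference is $4P(a)$. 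If you try to carry out your plan, you should aim for this kind of two-branch comparison rather than a direct cancellation inside one diagram.
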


\begin{figure}[h]
\centering\includegraphics[width=0.6\textwidth]{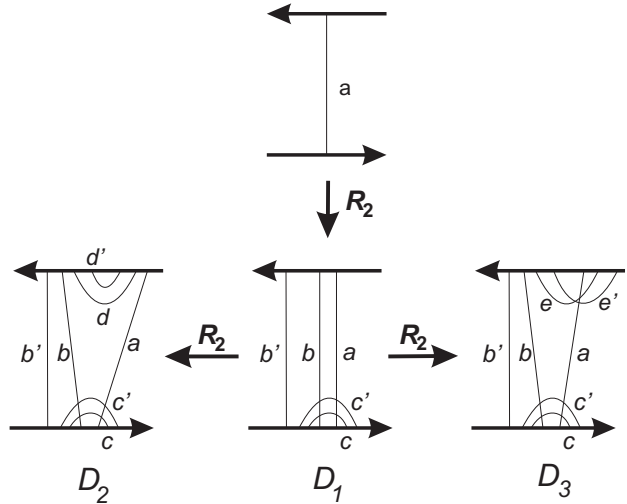}
\caption{Proof of the equality
$4P_D(a)=0$}\label{fig:free_parity_4P}
\end{figure}

\begin{proof}
By applying second Reidemeister moves we add chords $b,b'$, $c,c'$
(diagram $D_1$ in Fig.~\ref{fig:free_parity_4P}). We add further
pairs of chords $d,d'$ and $e,e'$ to obtain diagrams $D_2$ and $D_3$.
Sequences of chords $bad$ and $bcad$ form polygons in the diagram
$D_2$. Then from Lemma~\ref{lem:free_parity_polygon} we have
$P_{D_2}(b)+P_{D_2}(a)+P_{D_2}(d)=0$ and
$-P_{D_2}(b)+P_{D_2}(c)-P_{D_2}(a)+P_{D_2}(d)=0$. Then
$P_{D_2}(c)=2P_{D_2}(a)+2P_{D_2}(b)$ that implies the equality
$P_{D_1}(c)=2P_{D_1}(a)+2P_{D_1}(b)$ in the coefficient group
$P(D_1)$.

On the other hand, the polygons $bae$ and $bcae$ in $D_3$ yield
$P_{D_3}(b)-P_{D_3}(a)-P_{D_3}(e)=0$ and
$-P_{D_3}(b)+P_{D_3}(c)+P_{D_3}(a)-P_{D_3}(e)=0$. Hence
$P_{D_3}(c)=-2P_{D_3}(a)+2P_{D_3}(b)$ and
$P_{D_1}(c)=-2P_{D_1}(a)+2P_{D_1}(b)$.

Comparing the formulas for $P_{D_1}(c)$ we get the equality
$4P_{D_1}(a)=0$, so $4P_{D}(a)=0$.
\end{proof}

\begin{lemma}\label{lem:free_parity_even}
Let $a$ be an Gaussian even chord and $b_1, b_2,\dots, b_{2k}$ are the ends
of chords on one of the halves of the diagram $D$ at $a$. Then for
any oriented parity functor $P$ we have the equality
$$
P_D(a)= 2\sum_{i=1}^{2k} P_D(b_i).
$$
\end{lemma}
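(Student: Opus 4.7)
The plan is induction on $k$. The base case $k=0$ is immediate: with no chord ends on one half of $a$, a first Reidemeister move removes $a$ and property~(P1) gives $P_D(a)=0$, matching the empty sum on the right.

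A useful reformulation is available thanks to Lemma~\ref{lem:free_parity_4P}: an inner chord $\beta$ (one with both of its ends $b_i, b_j$ on the chosen half) contributes $2P_D(b_i)+2P_D(b_j)=4P_D(\beta)=0$ to the right-hand side. Therefore the identity to be proved is equivalent to
\[
P_D(a)=2\sum_{\gamma\text{ crosses }a}P_D(\gamma),
\]
where the sum runs only over chords $\gamma$ with exactly one endpoint among $\{b_1,\dots,b_{2k}\}$.

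For the inductive step I examine two adjacent chord ends $b_i,b_{i+1}$ on the chosen arc. If they are the two ends of a single chord $\beta$, then $\beta$ is removable by a first Reidemeister move, property~(P1) gives $P_D(\beta)=0$, and the inductive hypothesis applied to the diagram with $\beta$ erased (which has $2k-2$ chord ends on the side of $a$) concludes this sub-case, the erased pair contributing $4P_D(\beta)=0$ to the sum. Otherwise $b_i$ and $b_{i+1}$ belong to distinct chords, and I follow the template of the proof of Lemma~\ref{lem:free_parity_4P}: insert auxiliary chord pairs via second Reidemeister moves near $b_i,b_{i+1}$ so that together with $a$ and the neighbouring original chords they form polygons suitable for Lemma~\ref{lem:free_parity_polygon}. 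Combining these polygon identities with the relations $P_D(c)+P_D(c')=0$ on each auxiliary pair cancels the contributions of all chords except $a$, $b_i$ and $b_{i+1}$, producing a reduction either to a diagram with fewer relevant chord ends or to a configuration where a common-chord pair becomes available and the first sub-case applies.

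The principal obstacle is this second sub-case: one must place the auxiliary R2 moves and the accompanying polygons so that, after summing the polygon identities, the unwanted terms cancel in pairs, while tracking the signs $\eps_\pi$ at each inserted bigon. The reformulation to crossing chords in the second paragraph is what keeps this bookkeeping tractable, since any auxiliary chord ending entirely on one side of $a$ contributes $4P_D=0$ and can be dropped from the count.
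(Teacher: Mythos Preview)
Your outline has the right skeleton (induction on $k$, base case via (P1), peel off a pair of adjacent ends), and the observation that inner chords contribute $4P_D(\beta)=0$ is correct once Lemma~\ref{lem:free_parity_4P} is available. However, the second sub-case of your induction step is not a proof: you describe the \emph{shape} of the argument (``insert auxiliary chord pairs \dots\ form polygons \dots\ cancel'') and then explicitly flag it as ``the principal obstacle,'' without ever specifying which R2 moves you perform, which polygons you invoke, or how the signs $\eps_\pi$ line up. That is precisely the content of the lemma, and it is missing.

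The paper closes this gap by a direct construction rather than a vague reduction. It first treats $k=1$ separately: when $b_1,b_2$ belong to different chords, one R2 move adds a pair $c,c'$ so that $b_1b_2c$ is a triangle and $b_1ab_2c$ is a quadrilateral; the two polygon identities from Lemma~\ref{lem:free_parity_polygon} combine to give $P_D(a)=2(P_D(b_1)+P_D(b_2))$. For the induction step $k\to k+1$, the paper performs \emph{two} R2 moves, adding pairs $c,c'$ and $d,d'$ placed so that $c'$ sees $b_1,\dots,b_{2k}$ on one side while $d'$ sees only $b_{2k+1},b_{2k+2}$; the triangle $acd$ together with the bigon relations yields $P_D(a)=P_D(c')+P_D(d')$, and the two summands are handled by the induction hypothesis and the $k=1$ case respectively. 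This is exactly the explicit placement-and-cancellation that your sketch promises but does not deliver; once you write it down, your argument and the paper's coincide.
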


\begin{proof}
We prove the lemma by induction on $k$. The case $k=0$ follows from
the property (P1).

Let $k=1$. If $b_1$, $b_2$ are the ends of one chord then it can be
removed with a first Reidemeister move. After that the cord $a$ can
also be removed with a first Reidemeister move. Then by property
(P1)  we have $P_D(b_1)=P_D(b_2)=0$ and $P_D(a)=0$ so
$P_D(a)=2(P_D(b_1)+P_D(b_2))$.

\begin{figure}[h]
\centering\includegraphics[width=0.5\textwidth]{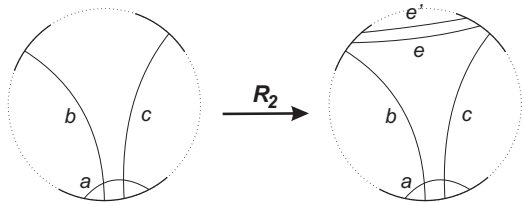}
\caption{Case $k=1$}\label{fig:free_parity_even_two}
\end{figure}

If the ends $b_1$ and $b_2$ belong to different chords then their
other ends lie in the other half of the diagram. Add with a second
Reidemeister move chords $c$ and $c'$ (see
Fig.~\ref{fig:free_parity_even_two}). Then chords $b_1, b_2, c$ and
chord $b_1,a,b_2,c$ form polygons. By
Lemma~\ref{lem:free_parity_polygon} we have
$P_{D'}(b_1)+P_{D'}(b_2)+P_{D'}(c)=0$ and
$-P_{D'}(b_1)+P_{D'}(a)-P_{D'}(b_2)+P_{D'}(c)=0$. Hence
$P_{D'}(a)=2(P_{D'}(b_1)+P_{D'}(b_2))$, so
$P_{D}(a)=2(P_{D}(b_1)+P_{D}(b_2))$.

\begin{figure}[h]
\centering\includegraphics[width=0.75\textwidth]{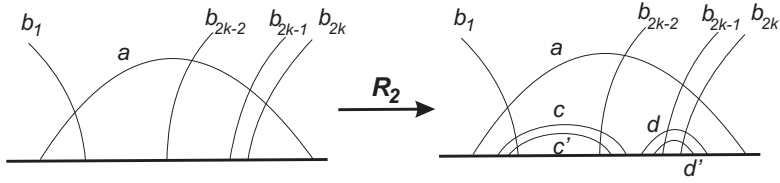}
\caption{Induction step}\label{fig:free_parity_even_step}
\end{figure}

Assume that the formula is valid for even chords with $2k$ ends on a
half. Let $a$ be a chord with $2k+2$ ends on a half at $a$. By two
second Reidemeister moves we add chords $c, c'$ and $d,d'$ as shown
in Fig.~\ref{fig:free_parity_even_step}. Then by induction
$P_D(c')=2\sum_{i=1}^{2k}P_D(b_i)$ and
$P_D(d')=2(P_D(b_{2k+1})+P_D(b_{2k+2}))$. From properties (P2) and
(P3) we have $P_D(c)+P_D(c')=0$, $P_D(d)+P_D(d')=0$ and
$P_D(a)+P_D(c)+P_D(d)=0$. Hence
$$
P_D(a)=-P_D(c)-P_D(d)=P_D(c')+P_D(d')=2\sum_{i=1}^{2k+2}P_D(b_i).
$$
\end{proof}

\begin{corollary}\label{crl:free_parity_even}
If a chord $a$ of the diagram $D$ is even for Gaussian parity then for
any oriented parity
\begin{enumerate}
\item $2P_D(a)=0$;
\item $P_D(a)= 2\sum_{\textstyle b \colon b\mbox{ is Gaussian odd and linked with }a} P_D(b)$.
\end{enumerate}
\end{corollary}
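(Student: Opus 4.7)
The plan is to derive both statements directly from Lemma~\ref{lem:free_parity_even} (which expresses $P_D(a)$ as twice a sum over chord endpoints on one half of the diagram at $a$), combined with Lemma~\ref{lem:free_parity_4P} (which gives $4P_D(c)=0$ for every chord $c$).

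For part (1), I would simply double the identity of Lemma~\ref{lem:free_parity_even}:
\[
2P_D(a) \;=\; 4\sum_{i=1}^{2k} P_D(b_i) \;=\; \sum_{i=1}^{2k} 4P_D(b_i) \;=\; 0,
\]
each summand vanishing by Lemma~\ref{lem:free_parity_4P}.

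For part (2), the key step is to reorganize the sum $\sum_{i=1}^{2k} P_D(b_i)$, which is indexed by chord \emph{endpoints} on the chosen half, as a sum indexed by chords. Each chord $c\neq a$ falls into exactly one of three classes: (i) both endpoints of $c$ lie on the chosen half (equivalently, $c$ is not linked with $a$ and lies on that side), contributing $2P_D(c)$ to the sum; (ii) $c$ has one endpoint on each half, i.e.\ $c$ is linked with $a$, contributing $P_D(c)$; (iii) both endpoints lie on the other half, contributing nothing. Substituting into Lemma~\ref{lem:free_parity_even} yields
\[
P_D(a) \;=\; 4\!\!\sum_{c\text{ not linked, both ends on chosen half}}\!\!P_D(c) \;+\; 2\!\!\sum_{c\text{ linked with }a}\!\!P_D(c).
\]
The first sum is zero by Lemma~\ref{lem:free_parity_4P}. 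In the second sum, any Gaussian even chord $c$ linked with $a$ satisfies $2P_D(c)=0$ by part (1), so those terms also vanish, leaving precisely the desired identity.

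There is no real obstacle here; the only point requiring a small amount of care is the bookkeeping that distinguishes \emph{chord endpoints} (as summed in Lemma~\ref{lem:free_parity_even}) from \emph{chords} (as summed in the target formula), together with the observation that chords not linked with $a$ are precisely those with both endpoints on one side. Once this translation is made explicit, parts (1) and (2) follow by a one-line application of the two preceding lemmas each.
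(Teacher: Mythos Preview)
Your proof is correct and follows exactly the same approach as the paper's own proof, which simply records that part~(1) is immediate from Lemmas~\ref{lem:free_parity_4P} and~\ref{lem:free_parity_even}, and that part~(2) follows from Lemma~\ref{lem:free_parity_even} together with part~(1). You have merely spelled out in detail the endpoint-versus-chord bookkeeping and the elimination of even linked chords that the paper leaves implicit.
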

\begin{proof}
The first statement follows immediately from
Lemmas~\ref{lem:free_parity_4P},\ref{lem:free_parity_even}. The
second statement follows from Lemma~\ref{lem:free_parity_even} and
the first statement.
\end{proof}

\begin{lemma}\label{lem:free_parity_difference}
Let $a$ and $b$ be chords of a free knot diagram $D$ that have the
same Gaussian parity. Let $c_1,\dots,c_k$ and $d_1,\dots,d_l$ are
the ends of chords in two opposite segments separating the chords
$a$ and $b$. Then
$$
P_D(a)= (-1)^{k+1}P_D(b)+2\left(\sum_{i=1}^k P_D(c_i)+\sum_{j=1}^l
P_D(d_j)\right).
$$
\end{lemma}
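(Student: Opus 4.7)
I would prove the identity by induction on $k+l$, the total number of chord ends in the two opposite separating arcs, combining the polygon identity of Lemma~\ref{lem:free_parity_polygon} with second Reidemeister moves that introduce auxiliary chord pairs. The strategy parallels the proof of Lemma~\ref{lem:free_parity_even}: auxiliary R2 chords are added in order to create valid polygons to which Lemma~\ref{lem:free_parity_polygon} may be applied, after which the identity is extracted by combining polygon identities and invoking the relation $4P_D=0$ of Lemma~\ref{lem:free_parity_4P}.

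\textbf{Base case $k=l=0$.} Both separating arcs are empty, so $a''$ is adjacent to one end of $b$ and $a'$ to the other on the diagram circle. Hence $\{a,b\}$ is a valid polygon of length two. Lemma~\ref{lem:free_parity_polygon} gives $\eps_\pi(a)P_D(a)+\eps_\pi(b)P_D(b)=0$, and reading off the incidence indices from the two connecting arcs (which carry consistent orientation relative to the polygon) shows $\eps_\pi(a)=\eps_\pi(b)=1$. Thus $P_D(a)+P_D(b)=0$, which is precisely the claimed formula for $k=l=0$, with both sums empty.

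\textbf{Inductive step.} Assume the claim for smaller values of $k+l$, and suppose $k\ge 1$. I would introduce, via a second Reidemeister move, a pair of chords $(e,e')$ with ends placed so that $e$ replaces $c_k$ in an enlarged polygon containing $a$, $\widetilde c_1,\dots,\widetilde c_{k-1}$, $b$, $\widetilde d_1,\dots,\widetilde d_l$ and $e$, while the partner $e'$ together with $\widetilde c_k$ forms an auxiliary small polygon. Applying Lemma~\ref{lem:free_parity_polygon} to the enlarged polygon and invoking the induction hypothesis (with separating counts $(k-1,l)$, yielding a sign $(-1)^{k}$ on $P_D(b)$) gives a relation between $P_D(a)$, $P_D(b)$, the $P_D(c_i)$ for $i<k$, the $P_D(d_j)$ and $P_D(e)$. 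The auxiliary polygon together with the relation $P_{D'}(e)+P_{D'}(e')=0$ coming from a decreasing R2 (property (P2)), and the doubling identity $4P_D=0$ of Lemma~\ref{lem:free_parity_4P}, converts the $P_D(e)$ contribution into exactly $\pm 2P_D(c_k)$. Summing the two identities flips the sign on $P_D(b)$ from $(-1)^k$ to $(-1)^{k+1}$ and introduces the required term $2P_D(c_k)$; naturality (P0) transports the identity from the enlarged diagram $D'$ back to $D$.

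\textbf{Main obstacle.} The combinatorial heart of the argument is the precise placement of the auxiliary chord pair $(e,e')$: the partner ends $c_i^*$ and $d_j^*$ of the chords $\widetilde c_i,\widetilde d_j$ lie in arbitrary positions in the two non-separating arcs and must not break the adjacency conditions required for the polygons to be valid. The second delicate point is the careful bookkeeping of the incidence indices $\eps_\pi$ along each polygon, which must combine to produce precisely the alternation $(-1)^{k+1}$ in the final identity. The factor $2$ in front of $P_D(c_i)$ arises naturally because the contribution of $c_k$ appears with opposite signs in two polygon identities whose sum yields $2P_D(c_k)$ after application of Lemma~\ref{lem:free_parity_4P}.
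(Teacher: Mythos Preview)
Your base case is correct, but the inductive step has a genuine gap. You claim there is an ``enlarged polygon containing $a,\widetilde c_1,\dots,\widetilde c_{k-1},b,\widetilde d_1,\dots,\widetilde d_l$ and $e$'' to which Lemma~\ref{lem:free_parity_polygon} applies. No such polygon exists in general: the $c_i$ are merely \emph{ends} of chords lying on one separating arc, and the \emph{other} end of the chord through $c_i$ can sit anywhere on the circle. The adjacency condition in the definition of a polygon requires that, after traversing the chord through $c_i$, one lands next to the end $c_{i+1}$; nothing in the hypotheses guarantees this. The same objection applies to the $d_j$. Moreover, your reduction of the separating count from $(k,l)$ to $(k-1,l)$ is unexplained: inserting an R2 pair only adds ends to the arcs, so applying the induction hypothesis to the same pair $(a,b)$ in the enlarged diagram does not lower $k+l$.

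The paper avoids induction on $k+l$ altogether. It argues by cases on whether $a,b$ are linked and on the parity of $k$ (which equals the parity of $l$ since $a,b$ have equal Gaussian parity). In each case one inserts R2 pairs $e,e'$ and $f,f'$ whose placement makes $e'$ (resp.\ $f'$) a Gaussian-\emph{even} chord with exactly the ends $c_1,\dots,c_k$ (resp.\ $d_1,\dots,d_l$), possibly after peeling off one end, on one of its halves. Lemma~\ref{lem:free_parity_even} then gives $P(e')=2\sum P(c_i)$ and $P(f')=2\sum P(d_j)$ in one stroke, and the only polygon identity needed is a short one (a $4$-gon or a $6$-gon) relating $a,b$ to $e,f$. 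The factor~$2$ thus comes from Lemma~\ref{lem:free_parity_even}, not from combining two polygon identities via $4P=0$. If you want to rescue an inductive approach, you would need to apply the hypothesis to a \emph{different} pair of chords (e.g.\ $a$ and a freshly inserted chord $e$) chosen so that their separating arcs genuinely contain fewer ends; this requires placing the R2 ends in two distinct arcs and tracking Gaussian parities carefully, which is considerably more delicate than what your sketch indicates.
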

\begin{proof}
Assume that chords $a$ and $b$ are not linked and $k$ is even (so
$l$ is even too). Then add chords $e,e'$ and $f,f'$ with second
Reidemeister moves (see Fig.~\ref{fig:free_parity_even_even}). Then
in the new diagram $D'$ we have the equalities
$P_{D'}(a)+P_{D'}(e)+P_{D'}(b)+P_{D'}(f)=0$,
$P_{D'}(e)+P_{D'}(e')=0$, $P_{D'}(f)+P_{D'}(f')=0$ and
$P_{D'}(e')=2\sum_{i=1}^k P_{D'}(c_i)$, $P_{D'}(f')=2\sum_{j=1}^l
P_{D'}(d_j)$ due to Lemma~\ref{lem:free_parity_even}. Then
$$
P_{D'}(a)=-P_{D'}(b)-P_{D'}(e)-P_{D'}(f)=-P_{D'}(b)+2\sum_{i=1}^k
P_{D'}(c_i)+2\sum_{j=1}^l P_{D'}(d_j),
$$
so $P_{D}(a)=-P_D(b)+2\left(\sum_{i=1}^k P_D(c_i)+\sum_{j=1}^l
P_D(d_j)\right)$.

\begin{figure}[h]
\centering\includegraphics[width=0.75\textwidth]{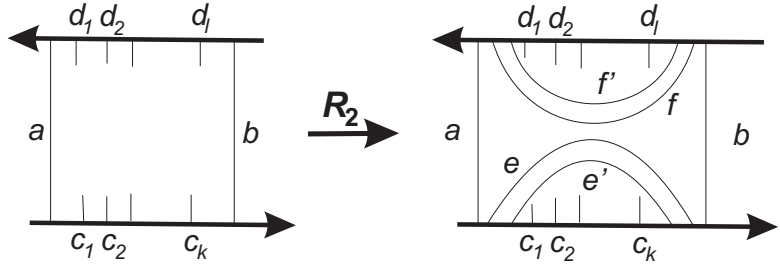}
\caption{Case $k$ and $l$ are even}\label{fig:free_parity_even_even}
\end{figure}

Suppose that chords $a$ and $b$ are not linked and $k$ and $l$ are odd. Add with second Reidemeister moves auxiliary chords $e,e',f,f',g,g'$. In the new diagram we have polygons $ad_1gc_1$, $bfd_1gc_1e$, $ee'$, $ff'$ that give the equalities $P_{D'}(a)-P_{D'}(d_1)+P_{D'}(g)-P_{D'}(c_1)=0$,
$P_{D'}(b)+P_{D'}(f)+P_{D'}(d_1)+P_{D'}(g)+P_{D'}(c_1)+P_{D'}(e)=0$,
$P_{D'}(e)+P_{D'}(e')=0$, $P_{D'}(f)+P_{D'}(f')=0$. From Lemma~\ref{lem:free_parity_even} we have also $P_{D'}(e')=2\sum_{i=2}^k P_{D'}(c_i)$, $P_{D'}(f')=2\sum_{j=2}^l
P_{D'}(d_j)$. Then
\begin{multline*}
P_{D'}(a)=P_{D'}(b)+P_{D'}(e)+P_{D'}(f)+2P_{D'}(c_1)+2P_{D'}(d_1)=\\
P_{D'}(b)+2\sum_{i=1}^k P_{D'}(c_i)+2\sum_{j=1}^l P_{D'}(d_j).
\end{multline*}
Here we used the equalities $P_{D'}(e)=-P_{D'}(e)$, $P_{D'}(f)=-P_{D'}(f)$ that follows from Corollary~\ref{crl:free_parity_even}.

Thus, $P_{D'}(a)=P_D(b)+2\left(\sum_{i=1}^k P_D(c_i)+\sum_{j=1}^l
P_D(d_j)\right)$.

\begin{figure}[h]
\centering\includegraphics[width=0.8\textwidth]{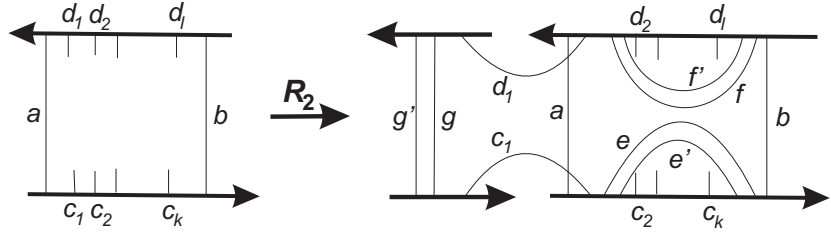}
\caption{Case $k$ and $l$ are odd}\label{fig:free_parity_odd_odd}
\end{figure}

The case when the chords $a$ and $b$ are linked can be treated analogously.

\end{proof}

The following statement is an immediate consequence of Lemmas~\ref{lem:free_parity_4P} and \ref{lem:free_parity_difference}.
\begin{corollary}
If $a$ and $b$ are Gaussian odd chords in a free knot diagram $D$
then for any oriented parity $P$ we have $2P_D(a)=2P_D(b)$.
\end{corollary}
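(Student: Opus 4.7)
The plan is to chain the two preceding results together. Since $a$ and $b$ are both Gaussian odd chords they carry the same Gaussian parity, so Lemma~\ref{lem:free_parity_difference} applies directly and yields
\[
P_D(a)=(-1)^{k+1}P_D(b)+2\left(\sum_{i=1}^{k}P_D(c_i)+\sum_{j=1}^{l}P_D(d_j)\right)
\]
for suitable chord ends $c_1,\ldots,c_k$ and $d_1,\ldots,d_l$ lying in the two opposite arcs separating $a$ from $b$.

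First I would multiply this identity by $2$, obtaining
\[
2P_D(a)=2(-1)^{k+1}P_D(b)+4\left(\sum_{i=1}^{k}P_D(c_i)+\sum_{j=1}^{l}P_D(d_j)\right).
\]
Next I would invoke Lemma~\ref{lem:free_parity_4P}, according to which $4P_D(x)=0$ for every chord $x$ of $D$. Applying it to each $c_i$ and each $d_j$ makes the bracketed sum vanish, leaving $2P_D(a)=2(-1)^{k+1}P_D(b)$. Applying the same lemma one more time to $b$ itself gives $4P_D(b)=0$, i.e.\ $-2P_D(b)=2P_D(b)$, so the sign $(-1)^{k+1}$ is irrelevant and the desired equality $2P_D(a)=2P_D(b)$ drops out.

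I do not expect any real obstacle in this argument: the whole content of the statement is precisely that the sign ambiguity in the difference formula of Lemma~\ref{lem:free_parity_difference} and the $4$-torsion of Lemma~\ref{lem:free_parity_4P} conspire to make $2P_D(\cdot)$ a common value on all Gaussian odd chords. The only point worth checking is the hypothesis of Lemma~\ref{lem:free_parity_difference}, which is immediate since ``both Gaussian odd'' is a special case of ``same Gaussian parity''.
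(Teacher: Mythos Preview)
Your proposal is correct and matches the paper's approach exactly: the paper simply states that the corollary is an immediate consequence of Lemmas~\ref{lem:free_parity_4P} and~\ref{lem:free_parity_difference}, and you have spelled out precisely how those two lemmas combine (multiply the difference formula by $2$, kill the $4$-torsion terms, and use $4P_D(b)=0$ to eliminate the sign).
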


We denote the element $2P_D(a)\in P(D)$, $a$ is a Gaussian odd chord
in the diagram $D$, by $z_D\in P(D)$. Then $2z_D=0$.

\begin{proposition}\label{prop:free_parity_odd_even}
Let $P$ be an oriented parity functor on free knots, $D$ be a free knot diagram.
\begin{itemize}
\item If $a$ is a Gaussian even chord in $D$ then
$$
P_D(a)=
\left\{
  \begin{array}{cl}
    0, & \hbox{if $a$ is linked with even number of Gaussian odd chords;} \\
    z_D, & \hbox{if $a$ is linked with odd number of Gaussian odd chords.}
  \end{array}
\right.
$$
\item Let $a$ and $b$ be Gaussian odd chords in $D$ and $k_e$ and $l_e$ (resp. $k_o$ and $l_o$) be the numbers of ends of even (resp. odd) chords on the two opposite segments separating the chords $a$ and $b$. Then
$$
P_D(a)=P_D(b)+(k_e+l_o+1)z_D.
$$
\end{itemize}
\end{proposition}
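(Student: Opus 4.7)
The plan is to derive both parts directly from the lemmas proved earlier in this section, with the key observation that every contribution can be reduced to a multiple of $z_D$, which is $2$-torsion.

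For part 1, I would apply the second statement of Corollary~\ref{crl:free_parity_even}: since $a$ is Gaussian even, $P_D(a)=2\sum_b P_D(b)$ where $b$ ranges over Gaussian odd chords linked with $a$. Each summand satisfies $2P_D(b)=z_D$ by definition of $z_D$, so $P_D(a)=n z_D$ where $n$ is the number of such chords $b$. Since $2z_D=0$, the element $n z_D$ depends only on the parity of $n$, which is exactly the dichotomy in the statement.

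For part 2, I would start from Lemma~\ref{lem:free_parity_difference} applied to the Gaussian odd pair $(a,b)$:
$$P_D(a)=(-1)^{k+1}P_D(b)+2\left(\sum_i P_D(c_i)+\sum_j P_D(d_j)\right),$$
with $k=k_e+k_o$ and $l=l_e+l_o$. The sum $2\sum_i P_D(c_i)$ rewrites as $\sum_c 2n_c P_D(c)$, where $n_c\in\{0,1,2\}$ counts the ends of chord $c$ in the first separating segment. The contributions with $n_c=2$ vanish by Lemma~\ref{lem:free_parity_4P} (since $4P_D(c)=0$), the contributions with $n_c=1$ and $c$ Gaussian even vanish by Corollary~\ref{crl:free_parity_even} (since $2P_D(c)=0$), and the remaining terms contribute one $z_D$ for each Gaussian odd chord with exactly one end in the segment. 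Such chords add $1$ to $k_o$ while odd chords with both ends in the segment add $2$, so their number is congruent to $k_o$ modulo $2$, and hence $2\sum_i P_D(c_i)=k_o z_D$ using $2z_D=0$. The same reasoning gives $2\sum_j P_D(d_j)=l_o z_D$.

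It remains to absorb the sign $(-1)^{k+1}$. From $2P_D(b)=z_D$ and $2z_D=0$ one has $-P_D(b)=P_D(b)+z_D$, so $(-1)^{k+1}P_D(b)=P_D(b)+\alpha z_D$ with $\alpha=1$ when $k$ is even and $\alpha=0$ when $k$ is odd. Putting everything together gives $P_D(a)=P_D(b)+(\alpha+k_o+l_o)z_D$, and a short case split on the parity of $k=k_e+k_o$ verifies $\alpha+k_o+l_o\equiv k_e+l_o+1\pmod 2$. Since $z_D$ has order dividing $2$, this gives the desired formula. The main subtlety, and the step I expect to require the most care, is exactly this parity bookkeeping: the asymmetric form $k_e+l_o+1$ in the conclusion arises from the interplay between the symmetric contribution $(k_o+l_o)z_D$ from the two segments and the $(-1)^{k+1}$ prefactor, together with the identity $k_e+k_o=k$.
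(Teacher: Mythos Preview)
Your proof is correct and follows essentially the same route as the paper: part~1 via Corollary~\ref{crl:free_parity_even} and the definition of $z_D$, and part~2 via Lemma~\ref{lem:free_parity_difference} combined with Corollary~\ref{crl:free_parity_even}, the definition of $z_D$, and the identity $-P_D(b)=P_D(b)+z_D$. The only difference is presentational: where the paper writes the double sum directly as $k_e\cdot 0+k_oz_D+l_e\cdot 0+l_oz_D$ (counting ends) and records the sign as $(-1)^{k_e+k_o+1}P_D(b)=P_D(b)+(k_e+k_o+1)z_D$ in one line, you regroup the sum by chord and do a short case split on the parity of $k$; both computations collapse to $P_D(b)+(k_e+l_o+1)z_D$.
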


\begin{proof}
1) The first statement follows from Corollary~\ref{crl:free_parity_even} and the definition of $z_D$.

2) From Lemma~\ref{lem:free_parity_difference}, Corollary~\ref{crl:free_parity_even} and the definition of $z_D$ we have
\begin{multline*}
P_D(a)=(-1)^{k_e+k_o+1}P_D(b)+(k_e\cdot 0+k_oz_D+l_e\cdot 0+l_oz_D)=\\ (-1)^{k_e+k_o+1}P_D(b)+(k_o+l_o)z_D= P_D(z)+(k_e+k_o+1)z_D+(k_o+l_o)z_D=\\ P_D(b)+(k_e+l_o+1)z_D
\end{multline*}
where the equality $(-1)^{k_e+k_o+1}P_D(b)=P_D(b)+(k_e+k_o+1)z_D$ follows from $P_D(b)=-P_D(b)+z_D$.
\end{proof}

\begin{remark}
We can divide the set of chords $\V(D)$ of a free knot diagram $D$ into four classes $E_0, E_1, O', O''$ where
\begin{itemize}
\item $E_0$ consists of Gaussian even chords which are linked with even number of Gaussian odd chords;
\item $E_1$ consists of Gaussian even chords which are linked with odd number of Gaussian odd chords;
\item $O'$ and $O''$ form a splitting of the set of Gaussian odd chords such that two odd chords $a$ and $b$ belong to the same class $O'$ or $O''$ iff the corresponding number $k_e+l_o+1$ (see Proposition~\ref{prop:free_parity_odd_even}) is even.
\end{itemize}

For any oriented parity functor $P$ parities of any two chord from one class $E_0, E_1, O'$ or  $O''$ coincide according to Proposition~\ref{prop:free_parity_odd_even}.

Note that classes $E_1, O',O''$ are empty if the diagram has no odd chords.
\end{remark}

\begin{definition}\label{def:oriented_Gaussian_parity}
We define the {\em oriented Gaussian parity functor} $(P^{og}, \G^{og})$ as follows: for any free knot diagram $D$  we set $\G^{og}(D)=0$ to be trivial group if $D$ has no Gaussian odd chords, and $\G^{og}(D)\simeq\Z_4$ if $D$ has Gaussian odd chords. In the latter case we identify $\G^{og}(D)$ with the four-element set $\{ E_0,E_1,O',O''\}$ of classes of crossings. Isomorphism of $\G^{og}(D)$ with $\Z_4$ is defined so that $E_0$ corresponds to $0$, $E_1$ corresponds to $2$ and the classes $O',O''$ correspond to $1$ and $3$. The map $P^{og}_D\colon \V(D)\to\G^{og}(D)$ is the natural projection.
\end{definition}

\begin{theorem}\label{thm:free_parity_universal}
Oriented Gaussian parity functor is correctly defined and is the universal oriented parity functor on free knots.
\end{theorem}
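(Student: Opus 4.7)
The plan is to establish the theorem in two stages: first that $(P^{og}, \G^{og})$ is a well-defined oriented parity functor on $\mathfrak K$, and then that it satisfies the universal property. For well-definedness, I first check that the four-class partition $\V(D) = E_0 \sqcup E_1 \sqcup O' \sqcup O''$ makes sense: the classes $E_0, E_1$ are defined by an explicit linking-number condition, while for $O', O''$ I must verify that the relation ``$k_e + l_o + 1 \equiv 0 \pmod 2$'' on pairs of Gaussian odd chords is an equivalence relation---reflexivity and symmetry are immediate, and transitivity reduces to a direct combinatorial count on the chord diagram. Next, for each elementary morphism $f \colon D \to D'$, I define $\G^{og}(f)$ on the subgroup of $\G^{og}(D)$ generated by $\{P^{og}_D(v) : v \in \mathrm{dom}(f_*)\}$ by sending each $P^{og}_D(v)$ to $P^{og}_{D'}(f_*(v))$. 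Functoriality and the axioms (P1)--(P3+) then reduce to a case analysis on R1, R2, R3: an R1 chord is always in $E_0$ because its short arc contains no other chord ends, so $P^{og}_D(v) = 0$; the two R2 chords are nested and unlinked, share a common Gaussian parity, and their classes in $\Z_4$ sum to zero ($E_0 + E_0$, $E_1 + E_1 = 0$, or $O' + O'' = 0$, as dictated by the endpoints pattern $x_1 y_1 y_2 x_2$); and the R3 relation becomes a direct identity in $\Z_4$, provable via Lemma~\ref{lem:free_parity_polygon} applied to the triangle of chords.

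For universality, given an arbitrary oriented parity functor $(P, \G)$ on $\mathfrak K$, I construct the homomorphism $\phi_D \colon \G^{og}(D) \to \G(D)$ as follows. If $D$ has no Gaussian odd chords, then Corollary~\ref{crl:free_parity_even} gives $P_D(v) = 0$ for every $v$, so $\phi_D$ is the zero map. Otherwise, fix a chord $a_0 \in O'$ and set $\phi_D(1) = P_D(a_0)$ via the identification $\G^{og}(D) \cong \Z_4$; Lemma~\ref{lem:free_parity_4P} ensures $4P_D(a_0) = 0$, so $\phi_D$ extends uniquely to a homomorphism $\G^{og}(D) \to \G(D)$. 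Proposition~\ref{prop:free_parity_odd_even} then guarantees $P_D = \phi_D \circ P^{og}_D$: crossings in $E_0$ map to $0$, those in $E_1$ to $z_D = 2P_D(a_0)$, those in $O'$ to $P_D(a_0)$, and those in $O''$ to $P_D(a_0) + z_D = 3P_D(a_0)$. Independence of the choice of $a_0$ follows from the same proposition. Naturality $\phi_{D'} \circ \G^{og}(f) = \G(f) \circ \phi_D$ holds because both sides send $P^{og}_D(v)$ to $P_{D'}(f_*(v))$ for any $v \in \mathrm{dom}(f_*)$, and uniqueness of $\phi_D$ is automatic since $\G^{og}(D)$ is generated by the image of $P^{og}_D$.

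The main obstacle is the case analysis in defining $\G^{og}(f)$ as a valid partial isomorphism, particularly ensuring consistency of the $O'/O''$ labeling across diagrams and the correct specification of $\mathrm{dom}(\G^{og}(f))$ when a Reidemeister move alters the presence of Gaussian odd chords. The most delicate situation is a decreasing R2 that removes the last pair of odd chords: here I must verify that every preserved chord links with $0$ or $2$ of the two nested chords being removed (the nesting pattern $x_1 y_1 y_2 x_2$ forces any other chord to have its endpoints in the ``inside'' or ``outside'' regions only), so that no preserved chord in the source diagram lies in $E_1$, $O'$, or $O''$, ensuring the partial isomorphism collapses correctly from $\Z_4$ to the trivial group on the relevant subgroup.
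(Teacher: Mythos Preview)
Your overall strategy matches the paper's: verify axioms (P0)--(P3+) for $(P^{og},\G^{og})$ by case analysis, then derive universality from Proposition~\ref{prop:free_parity_odd_even} and Lemma~\ref{lem:free_parity_4P}. The universality half of your plan is essentially identical to the paper's and is fine.

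There is, however, a genuine gap in the well-definedness half. You propose to establish (P3+) for $(P^{og},\G^{og})$ ``via Lemma~\ref{lem:free_parity_polygon} applied to the triangle of chords.'' This is circular: Lemma~\ref{lem:free_parity_polygon} is proved for an \emph{arbitrary} oriented parity functor, and its base case $k=3$ is precisely the axiom (P3+). You cannot invoke it before you know $(P^{og},\G^{og})$ is a parity functor. The paper instead performs a direct verification of (P3+) in $\Z_4$: it splits into the case where all three chords are Gaussian even (where a parity count of odd ends in the three halves gives the relation) and the case where one chord is even and two are odd (where one must track the $O'/O''$ classes and the incidence indices $\epsilon(v_i)$ explicitly). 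This direct check is not difficult but is the actual content of the (P3+) verification, and your plan does not supply it.

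Two smaller points. First, your assertion that the two R2 chords are ``nested and unlinked'' with endpoint pattern $x_1y_1y_2x_2$ is not correct for free knots: the linked pattern $x_1y_1x_2y_2$ is also a valid R2 configuration on chord diagrams. Fortunately your subsequent reasoning (that any surviving chord links with $0$ or $2$ of the removed pair) goes through in both patterns, so this is easily repaired. Second, the bulk of the paper's well-definedness argument is devoted to (P0): checking case by case that a chord's class $E_0,E_1,O',O''$ is preserved under each Reidemeister move, and in particular that the relation ``same odd class'' (parity of $k_e+l_o$) is stable. Your plan gestures at this via the definition of $\G^{og}(f)$ but does not allocate the necessary case analysis; this is where most of the work lies, and your proposed verification that ``$k_e+l_o+1\equiv 0\pmod 2$'' is an equivalence relation does not by itself cover invariance under moves.
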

\begin{proof}
Let us check that the groups $\G^{og}(D)$ and the maps $P^{og}_D$ obey the conditions (P0)-(P3) of parity functor definition. Let $f\colon D_1\to D_2$ be a Reidemeister move.

Property (P0). Let $a$ be a Gaussian even chord in $D_1$ that does not disappear after the move $f$. We should show that the parity of $a$ does not change. If $a$ does not take part in the move then the number of ends of Gaussian odd chords on a half of the diagram at $a$ remains unchanged or changes by $2$ or $4$ (in the case of second Reidemeister move on two odd chords). The other possibility is $f$ is a third Reidemeister move on chords $a,b,c$. Then the chords $b$ and $c$ are either both Gaussian even or both Gaussian odd. In the latter case the number of odd ends on a half of the diagram at $a$ can change by $2$, in other cases the number does not change. Therefore, if $a$ belongs to class $E_0$ (or $E_1$) then the corresponding chord in $E_2$ belongs to the same class $E_0$ (or $E_1$).

Let $a$ and $b$ be Gaussian odd chords in $D_1$ that preserve after move $f$. Assume that $a$ and $b$  belong to the same class $O'$ (or $O''$). This means that the number $k_e+l_o$ is odd (see Proposistion~\ref{prop:free_parity_odd_even}). We should show that the corresponding chords $a',b'$ in $D_2$ belong to the same class. There are several cases.

Let $a$ and $b$ don't take part in the move. Then the number $k_e$ after the move does not change or changes by $2$ (when $f$ is a first Reidemeister move or a second move on two even chords) or $4$ (when $f$ is a second move on two even chords). Analogously, the number $l_o$ can change only by $2$ or $4$ (when $f$ is a second move on two odd chords). Thus, the move $f$ does not change the parity of the sum $k_e+l_o$, so the chords $a',b'$ in $D_2$ belong to the same class.

Let $f$ be a third Reidemeister move on chords $a,c,d$. Then one of the chords $c,d$ is Gaussian even and the other is Gaussian odd. In this case the numbers $k_e$, $l_o$ either remain the same or change by $1$, so the sum $k_e+l_o$ change by an even number. Hence, the chords $a',b'$ in $D_2$ belong to the same class.

Let $f$ be a third Reidemeister move on chords $a,b,c$. Then the chord $c$ is Gaussian even. In this case the numbers $k_e$ either remain the same or change by $2$ (if the both ends of $c$ belong to the segment between $a$ and $b$), the number $l_o$ does not change. So the sum $k_e+l_o$ change by an even number and the chords $a',b'$ in $D_2$ belong to the same class.

Analogous reasonings show that if $a$ and $b$ be Gaussian odd chords in $D_1$ that preserve after move $f$ and belong to different classes then corresponding chords in $D_2$ lie in different classes too. Thus, the property (P0) is valid for the oriented Gaussian parity.

If $\G^{og}(D)=0$ then properties (P1), (P2), (P3) are trivial. So we suppose further that $D$ contains Gaussian odd chords and $\G^{og}(D)\simeq\Z_4$.

Property (P1). If $f$ is a decreasing first Reidemeister move that remove chord $a$ then $a$ belongs to the class $E_0$ since there is a half at $a$ that contains no odd ends. So $P^{og}_{D_1}(a)=0$.

Property (P2). Let $f$ be a decreasing Reidemeister move that remove chords $a$ and $b$. If the $a$ and $b$ are even then their halves contain the same number of odd ends, so both $a$ and $b$ belong to $E_0$ or $E_1$. In the first case we have $P^{og}_{D_1}(a)+P^{og}_{D_1}(b)=0+0=0$, in the second case $P^{og}_{D_1}(a)+P^{og}_{D_1}(b)=2+2=0\in\Z_4$.

Assume now that the chords $a$ and $b$ are odd. We can take segments between $a$ and $b$ so that $k_e=l_o=0$. Then the chords belong to different odd classes and we have $P^{og}_{D_1}(a)+P^{og}_{D_1}(b)=1+3=0\in\Z_4$.

Property (P3). Let $f$ be a third Reidemeister move on chords $a,b,c$. Then either the chords $a,b,c$ are all Gaussian even or there are one even and two odd chords among them. If $a,b,c$ are Gaussian even we can take halves at these chords so that the halves cover the circle of the diagram and there is no ends of chords in the intersection of any two halves. Let $A,B,C$ are the number of ends of Gaussian odd chords in the halves at $a,b$ and $c$ correspondingly. Then the sum $A+B+C$ is the number of ends of odd chords in the diagram, so the sum is even. Since $P^{og}_{D_1}(a)=2A, P^{og}_{D_1}(b)=2B, P^{og}_{D_1}(c)=2C$ due to Proposition~\ref{prop:free_parity_odd_even}, we have
$$
P^{og}_{D_1}(a)+P^{og}_{D_1}(b)+P^{og}_{D_1}(c)=2(A+B+C)=0\in\Z_4.
$$
Note that $P^{og}_{D_1}(d)=-P^{og}_{D_1}(d)$ for any Gaussian even chord $d$, so the values of the incidence indices of the chords $a,b,c$ do not matter.

Assume now that the chord $a$ is Gaussian even and the chords $b$ and $c$ are Gaussian odd.  We assume further that the pair of chords $a$ and $b$ is linked and the pairs $a,c$ and $b,c$ are not linked (see Fig.~\ref{fig:oriented_Gaussian_parity_R3}). Then the incidence indices are  $\eps(a)=\eps(b)=-1$ and $\eps(c)=1$. We choose segments between $b$ and $c$ so that $k_e=k_o=0$. Then the number $l_o+1$ is the number of odd ends in the half of the diagram at the chord $a$ (we must count the chord $b$ besides the odd chords whose ends lie between $b$ and $c$).

\begin{figure}[h]
\centering\includegraphics[width=0.25\textwidth]{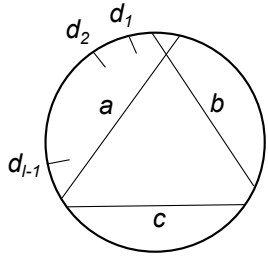}
\caption{Case $a$ and $b$ are linked}\label{fig:oriented_Gaussian_parity_R3}
\end{figure}

If $b$ and $c$ belong to the same class of crossings then $l_o$ is odd and $P^{og}_{D_1}(a)=2(l_o+1)=0\in\Z_4$. Then
$$
\eps(a)P^{og}_{D_1}(a)+\eps(b)P^{og}_{D_1}(b)+\eps(c)P^{og}_{D_1}(c)=-P^{og}_{D_1}(b)+P^{og}_{D_1}(c)=0
$$
since $b$ and $c$ have the same oriented Gaussian parity.

If $b$ and $c$ belong to the different classes then $l_o$ is even and $P^{og}_{D_1}(a)=2$. The sum
$$
\eps(a)P^{og}_{D_1}(a)+\eps(b)P^{og}_{D_1}(b)+\eps(c)P^{og}_{D_1}(c)
$$
is equal here either to $-2-1+3=0$ or $-2-3+1=-4=0\in\Z_4$.

The other linking configurations of the chords $a,b,c$ can be considered analogously.

Thus, the oriented Gaussian parity functor we defined is indeed an oriented parity functor. It remains to prove the parity functor is universal.

Let $(P,\G)$ be an oriented parity functor on free knots and $D$ be a free knot diagram. By Proposition~\ref{prop:free_parity_odd_even} the map $P_D$ is constant on each of classes of crossings $E_0, E_1, O', O''$. Hence, $P_D$ can be considered as a composition $P_D=\rho_D\circ P^{og}_D$, where $\rho_D$ maps $\G^{og}(D)$ to $\G(D)$. Note that the map $\rho_D$ is uniquely defined. From Proposition~\ref{prop:free_parity_odd_even} and definition of the group structure on $\G^{og}(D)$ it follows that $\rho_D$ is a group homomorphism. Thus, the oriented Gaussian parity functor is universal.
\end{proof}

\begin{remark}
Note that the isomorphism $\G^{og}(D)\simeq\Z_4$ in the definition of the oriented Gaussian parity functor is not canonical, so we can not consider $\G$ as a reduction of some universal oriented parity with coefficients in $\Z_4$.

For example, the parity of chords $b$ and $c$ in the diagram in fig.~\ref{fig:trivial_free_knot_diagram} should have values $1$ and $3$ in $\Z_4$ but we can not decide which chord corresponds to $1$ because they are symmetrical. If we assume the parity of the chords $b$ and $c$ were equal we would get the ordinary (nonoriented) Gaussian parity with coefficients $\Z_2$. This example shows that we need to consider parity functors if we want to keep maximal amount of information about the crossings.

\begin{figure}[h]
\centering\includegraphics[width=0.2\textwidth]{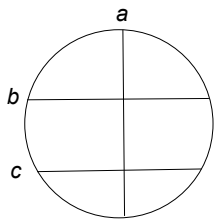}
\caption{A diagram of the free unknot}\label{fig:trivial_free_knot_diagram}
\end{figure}

On the other hand, if one considers {\em long free knot} the universal oriented parity functor does come from an oriented parity with coefficients in $\Z_4$.
\end{remark}

\begin{remark}\label{rem:oriented_gaussian_parity_invariants}
From the universal parity functor $(P^{og},\G^{og})$ we can extract an invariant of free knots:
 $L_{odd}(\mathcal K)=\left||O'|-|O''|\right|$ --- the difference between the numbers of odd chord in different classes. We have to take the absolute value of the difference because there is no canonical way to distinguish the two classes of odd chords.

The proof of invariance is trivial except for the second Reidemeister move where it follows from the property (P2) of the parity functor.

The invariant $L_{odd}$ is proportional to the invariant $L$ from~\cite{M5}. For the long free knots it can be refined with the formula $l(\mathcal K)=|O'|-|O''|$ where the class $O'$ corresponds to $1$ and $O''$ corresponds to $3$ by the identification of the coefficient groups $\G^{og}(D)$ with $\Z_4$.

For example, the free knot $\mathcal K$ in Fig.~\ref{fig:free_knot_example} has four odd chords which belong to the same class. Then $L_{odd}(K)=4$.

\begin{figure}[h]
\centering\includegraphics[width=0.2\textwidth]{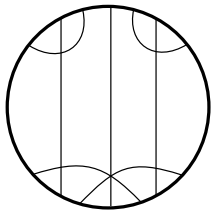}
\caption{A free knot}\label{fig:free_knot_example}
\end{figure}
\end{remark}

\section{Homotopical parity and parity functor}

The goal of this section is to describe the universal oriented parity functor for knots in a fixed surface.

Let $S$ be an oriented closed connected surface and $\mathcal K$ be a knot in $S$. Let us fix an arbitrary point $z\in S$ and consider the diagrams $D$ of the knot $\mathcal K$ such that $z\in D$ and $z$ is not a crossing of $D$. Such diagrams form a diagram category $\mathfrak K_z$. Let us define an oriented parity on the category $\mathfrak K_z$.

\begin{definition}\label{def:homotopical_parity}
    Consider the group $\Pi=\pi_1(S,z)/\langle[\mathcal K]\rangle$ where $\pi_1(S,z)$ is the fundamental group of the surface $S$ and $\langle[\mathcal K]\rangle$ is the normal subgroup generated by the homotopy class of the knot $\mathcal K$. For any diagram $D\in Ob(\mathfrak K_z)$ and any crossing $v\in\V(D)$ we define its {\em homotopical parity} $hp_D(v)\in\Pi$ to be the based left half $\hat D^l_v$ of the diagram $D$ at the crossing $v$ (Fig.~\ref{fig:based_half}).

\begin{figure}[h]
\centering\includegraphics[width=0.4\textwidth]{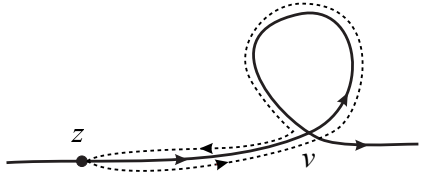}
\caption{Based left half $\hat D^l_v$ of the diagram $D$ at the crossing $v$}\label{fig:based_half}
\end{figure}

\end{definition}

\begin{theorem}
  The homotopical parity $hp$ is a well defined oriented parity on the diagram category $\mathfrak K_z$.
\end{theorem}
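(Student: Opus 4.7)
The argument will verify that $hp_D(v)$ is well-defined in $\Pi$ and then check the axioms (P0)--(P3+). For well-definedness, parameterize $D$ as $\rho\colon[0,1]\to S$ with $\rho(0)=\rho(1)=z$ following the orientation of $\mathcal K$; a crossing $v$ is visited at two times $t_1<t_2$, splitting $\rho$ into $\alpha=\rho|_{[0,t_1]}$, $\beta=\rho|_{[t_1,t_2]}$ (parameterizing the left half $D^l_v$ by convention), and $\delta=\rho|_{[t_2,1]}$. Define $\hat D^l_v:=\alpha\beta\alpha^{-1}\in\pi_1(S,z)$; then $\hat D^l_v\cdot(\alpha\delta)=\alpha\beta\delta=[\mathcal K]$, so $\hat D^r_v:=\alpha\delta$ is the corresponding based right half, and basing via the second visit to $v$ (path $\alpha\beta$) yields $(\alpha\beta)\beta(\alpha\beta)^{-1}=\alpha\beta\alpha^{-1}=\hat D^l_v$, so $hp_D(v)\in\Pi$ is canonical. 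For (P0), a diagram isotopy carries $\alpha,\beta,\delta$ continuously; for a Reidemeister move $f\colon D\to D'$ supported in a small disk $U\subset S$ disjoint from $z$ and from the surviving crossing $v$, the sub-arcs of $D$ and $D'$ differ only inside $U$ by a rel-endpoints homotopy, so the class in $\pi_1(S,z)$ is preserved.

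\textbf{(P1), (P2).} If an R1 creates $v$, then $D^l_v$ is the monogon loop, null-homotopic in $U$, so $hp_D(v)=1$. If an R2 creates $v_1,v_2$ cobounding a bigon $B\subset U$, a case analysis on the orientations of the two strands shows that $\hat D^l_{v_1}\cdot\hat D^l_{v_2}$ equals $1$ or $[\mathcal K]$ in $\pi_1(S,z)$: each left half runs along half of $\partial B$ together with a portion of the remainder of $\mathcal K$, and the two portions either cancel or combine to $\pm[\mathcal K]$. Both options are trivial in $\Pi$.

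\textbf{(P3+) and the main obstacle.} The three crossings $v_1,v_2,v_3$ of an R3 cobound a triangle $T\subset U$ whose boundary is null-homotopic in $S$. With a common basing path from $z$ to $\partial U$ for all three halves, the signed product $\prod_i(\hat D^l_{v_i})^{\epsilon(v_i)}$ has its contributions outside $U$ cancel pairwise --- each side of $\partial T$ lies between two of the three halves --- while the contributions inside $U$ assemble, with the prescribed signs, into a traversal of $\partial T$ possibly followed by a power of $[\mathcal K]$. The delicate step, and the main obstacle, is checking that the exponents $\epsilon(v_i)$ of Definition~\ref{def:oriented_parity} are precisely those for which the intended cancellation produces $\partial T$; this requires a case analysis on the orientations of the three strands through the R3 and a careful identification of which arc at each $v_i$ is ``left''. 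Once this compatibility is verified, $\partial T=1$ and $[\mathcal K]=1$ in $\Pi$ give $\prod_i(\hat D^l_{v_i})^{\epsilon(v_i)}=1$ as required.
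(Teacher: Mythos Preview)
Your approach matches the paper's: a direct verification of (P0)--(P3+) by explicit path computations, with the paper carrying out one representative configuration of R2 and R3 in detail rather than the geometric cancellation picture you sketch for (P3+).

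There is, however, a genuine gap in your (P0). You write that ``a diagram isotopy carries $\alpha,\beta,\delta$ continuously,'' but this fails precisely when the isotopy slides the crossing $v$ across the basepoint $z$. At that moment your decomposition $\rho=\alpha\beta\delta$ jumps: one of the preimages of $v$ passes the basepoint $0\in[0,1]$, so the roles of the arcs are reassigned. The paper singles out exactly this case (Fig.~\ref{fig:over_basepoint}) and computes that the new based left half equals $[\mathcal K]\,\hat D^l_v\,[\mathcal K]^{-1}$ in $\pi_1(S,z)$; only after passing to $\Pi=\pi_1(S,z)/\langle[\mathcal K]\rangle$ does this coincide with $\hat D^l_v$. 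This is not a detail you can absorb into ``carried continuously''---it is one of the places where quotienting by the normal closure of $[\mathcal K]$ is essential, and your argument as written does not cover it.

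A smaller point: in (P1) you assert that $D^l_v$ is the monogon loop, but the \emph{left} half may equally well be the complementary arc, whose class is $[\mathcal K]$ rather than $1$. The paper notes both possibilities; you should too.
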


\begin{proof}
    Let us check the property (P0)--(P3+) of the definition of parity.

0. Let $f\colon D\to D'$ be an elementary morphism and $v\in dom(f_*)$. Then the based loops $\hat D^l_v$ and $\hat D'^l_{v}$ are homotopic except the case when $f$ is an isotopy which moves the crossing $v$ over the base point $z$ (Fig.~\ref{fig:over_basepoint}). In latter case $[\hat D^l_v]=[\alpha]$ and  $[\hat D'^l_{v}]=[\alpha\beta\alpha\beta^{-1}\alpha^{-1}]=[\mathcal K][\hat D^l_v][\mathcal K]^{-1}=[\hat D^l_v]\in\Pi$.

\begin{figure}[h]
\centering\includegraphics[width=0.6\textwidth]{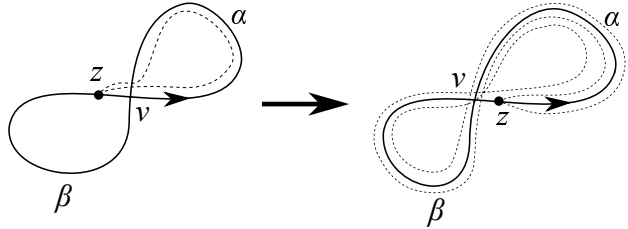}
\caption{A crossings passes over the basepoint}\label{fig:over_basepoint}
\end{figure}

1. Let $v\in\V(D)$ be a crossing which a decreasing first Reidemeister move can be applied to. Then $[D^l_v]$ is equal either to $1$ or to $[\mathcal K]=1\in\Pi$.

2. Let $u,v\in\V(D)$ be two crossings participating in a decreased second Reidemeister move (Fig.~\ref{fig:hom_parity_r2}). Then $hp_D(u)=[\gamma_1\alpha\gamma_1^{-1}]$ and $hp_D(v)=[\gamma_1\alpha\gamma_2\gamma_1\alpha^{-1}\gamma_1^{-1}]$, hence,
\begin{multline*}
  hp_D(u)hp_D(v)=[\gamma_1\alpha\gamma_1^{-1}][\gamma_1\alpha\gamma_2][\gamma_1\alpha\gamma_1^{-1}]^{-1}=\\
  [\gamma_1\alpha\gamma_1^{-1}][\mathcal K][\gamma_1\alpha\gamma_1^{-1}]^{-1}=1\in\Pi.
\end{multline*}

\begin{figure}[h]
\centering\includegraphics[width=0.25\textwidth]{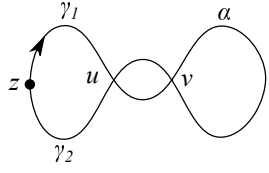}
\caption{Crossings participating in a second Reidemeister move}\label{fig:hom_parity_r2}
\end{figure}

The other configuration for second Reidemeister move can be treated analogously.

3. Let  $u,v,w\in\V(D)$ be the crossings participating in a third Reidemeister move (for example, see Fig.~\ref{fig:hom_parity_r3}). Then $hp_D(u)=[\gamma_1\alpha\gamma_1^{-1}]$, $hp_D(v)=[\gamma_1\alpha\beta\alpha^{-1}\gamma_1^{-1}]$ and $hp_D(w)=[\gamma_1\alpha\beta\gamma_1^{-1}]$. The incidence indices of the crossings are $\epsilon(u)=\epsilon(v)=-1$, $\epsilon(w)=1$. Then

\begin{multline*}
  hp_D(u)^{-1}hp_D(v)^{-1}hp_D(w)=[\gamma_1\alpha\gamma_1^{-1}]^{-1}[\gamma_1\alpha\beta\alpha^{-1}\gamma_1^{-1}]^{-1}
  [\gamma_1\alpha\beta\gamma_1^{-1}]=\\
  [\gamma_1\alpha^{-1}\gamma_1^{-1}\gamma_1\alpha\beta^{-1}\alpha^{-1}\gamma_1^{-1}\gamma_1\alpha\beta\gamma_1^{-1}]=1.
\end{multline*}

\begin{figure}[h]
\centering\includegraphics[width=0.3\textwidth]{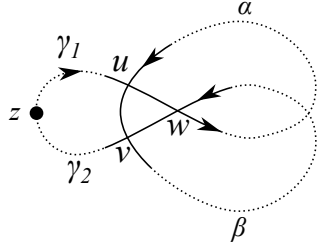}
\caption{Crossings participating in a third Reidemeister move}\label{fig:hom_parity_r3}
\end{figure}

Thus, the property (P3+) holds. Similarly, one can check the other configurations of third Reidemeister move.
\end{proof}

\begin{definition}\label{def:homotopical_parity_functor}
The {\em homotopical parity functor} on the diagram category $\mathfrak K_z$ is the canonical reduction $(HP,\tilde\Pi)$ of the homotopical parity $hp$.
\end{definition}

\begin{theorem}\label{thm:universal_homotopical_parity_functor}
  The homotopical parity $(HP,\tilde\Pi)$ is the universal oriented parity
functor on the diagram category $\mathfrak K_z$.
\end{theorem}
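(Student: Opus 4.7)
The strategy is to verify directly that $(HP,\tilde\Pi)$ has the defining universal property. Given any oriented parity functor $(P,\mathcal G)$ on $\mathfrak K_z$, I aim to construct, for each $D\in Ob(\mathfrak K_z)$, a unique homomorphism $\phi_D\colon\tilde\Pi(D)\to\mathcal G(D)$ with $P_D=\phi_D\circ HP_D$. Because $\tilde\Pi(D)$ is a canonical reduction, i.e.\ $\langle\V(D)\rangle/\ker(hp_D)$, the value of $\phi_D$ is forced on generators by $\phi_D(HP_D(v))=P_D(v)$; consequently uniqueness is automatic and naturality across elementary morphisms is a direct consequence of (P0) applied to $P$ and to $hp$. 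All the content is therefore concentrated in the \emph{well-definedness} of $\phi_D$: every defining relation of $\tilde\Pi(D)$ must be carried to a true relation in $\mathcal G(D)$.

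Unpacked, this is the following algebraic claim: for every word $w=v_1^{e_1}\cdots v_n^{e_n}\in\langle\V(D)\rangle$, if $\prod hp_D(v_i)^{e_i}=1$ in $\Pi=\pi_1(S,z)/\langle[\mathcal K]\rangle$, then $\prod P_D(v_i)^{e_i}=1$ in $\mathcal G(D)$. My plan is to present $\Pi$ from the CW structure that the diagram $D$ (viewed as a $4$-valent graph embedded in $S$) induces on $S$, choosing a spanning tree and paths from $z$ to each crossing so that the based halves $\hat D^l_v$ become a distinguished set of generators of $\pi_1(S\setminus D,z)$. Relations among them in $\pi_1(S,z)$ are then generated by (a) face-boundary loops for each face of $D$ in $S$, and (b) a single extra word corresponding to the loop $[\mathcal K]$, which is killed upon passing to the quotient $\Pi$.

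With this presentation in hand, the task reduces to verifying each class of relations on the parity side. For a face $F$ with $k$ crossings $v_1,\dots,v_k$ on its boundary carrying incidence indices $\epsilon_i$, I would prove $\prod P_D(v_i)^{\epsilon_i}=1$ by induction on $k$: the cases $k=1,2,3$ are exactly (P1), (P2), (P3+) after collapsing the face by the matching Reidemeister move, and for $k>3$ a single second Reidemeister move applied across $F$ splits it into two faces of smaller perimeter, so the inductive assumption combined with (P2) on the new pair of crossings yields the relation — this is formally the argument of Lemma~\ref{lem:free_parity_polygon}, now carried out with noncommutative coefficients and geometric (rather than chord-diagrammatic) polygons. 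The $[\mathcal K]=1$ relation is the assertion $hp_D(v)\cdot hp^r_D(v)=1$ in $\Pi$ for any crossing $v$, and follows by concatenating the face-boundary relations of the faces lying on the two sides of $v$, whose combined boundary traverses the knot once.

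The principal obstacle is step (a) of the presentation argument: one must choose the paths from the basepoint with enough care that the based-half loops $\hat D^l_v$ are identified with a specific set of generators whose face-boundary relations come out with the right incidence-index signs $\epsilon(v_i)$ required by (P3+); this is a bookkeeping matter but a nontrivial one, since noncommutativity makes the order and conjugation by the basepoint paths relevant. Once this combinatorial description is fixed and the isotopy through the basepoint (already handled in the proof that $hp$ is a parity) is invoked to show the choice does not affect the relations modulo $[\mathcal K]$, the polygon-splitting induction finishes the proof and, via the reasoning of paragraph one, yields the universality of $(HP,\tilde\Pi)$.
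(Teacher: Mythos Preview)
Your outline is essentially the paper's own strategy: prove a ``polygon lemma'' $\prod P_D(v_i)^{\epsilon_\pi(v_i)}=1$ for every face $\pi$ of $S\setminus D$ by induction on the number of sides (splitting via an $R2$ move), and then show that the kernel of $\langle\V(D)\rangle\to\Pi$, $v\mapsto[\hat D^l_v]$, is normally generated by the face words $W_\pi$. Uniqueness is indeed automatic from reducedness. So the architecture is right; the differences and gaps are in two places.

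First, your treatment of the $[\mathcal K]$-relation is muddled and in fact unnecessary. The based left halves $\hat D^l_v$ do \emph{not} generate $\pi_1(S,z)$: the graph $D$ has first Betti number $|\V(D)|+1$, so any free generating set of $\pi_1(D,z)$ has one more element than $\V(D)$. The paper avoids this mismatch by first gluing a disk along $[\mathcal K]$; then $\pi_1(D\cup_{\mathcal K}D^2)$ is free of rank $|\V(D)|$ with the left halves as a free basis (this is seen transparently on the Gauss diagram: collapse the core circle to a point and the chords become a wedge of circles), and gluing the remaining faces yields $\Pi$ directly with relations exactly the $W_\pi$. In this setup there is no separate $[\mathcal K]$-relation to verify in $\mathcal G(D)$: it has already been absorbed into the identification $F(D)\cong\pi_1(D\cup_{\mathcal K}D^2)$. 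Your claim that $[\mathcal K]=1$ ``follows by concatenating the face-boundary relations of the faces lying on the two sides of $v$'' does not parse---the knot does not globally separate $S$, so there are no ``two sides''---and the assertion $hp_D(v)\cdot hp^r_D(v)=1$ is, once $[\mathcal K]$ is killed, a tautology rather than a relation among the generators $v$.

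Second, you tacitly assume $S\setminus D$ is a union of open disks when you speak of ``faces''. For a general $D\in\mathfrak K_z$ some complementary regions may carry genus, and then the based halves need not even generate $\tilde\Pi(D)$. The paper handles this by applying increasing $R2$ moves $f\colon D\to D'$ until $S\setminus D'$ is cellular, building $\phi_{D'}$ there, and then pulling back along the partial isomorphism $\mathcal G(f)$; you should add this step. With these two corrections your argument coincides with the paper's, the only stylistic difference being that the paper carries out the ``bookkeeping'' you flag via the Gauss diagram rather than via an explicit spanning tree in $D$.
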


\begin{proof}
Let $(P,\mathcal G)$ be an oriented parity functor on $\mathfrak K_z$.

\begin{lemma}\label{lem:polygon_relation}
Let $D\in Ob(\mathfrak K_z)$ be a diagram of the knot $\mathcal K$ and $\pi$ be a cell in $S\setminus D$ with vertices $v_1,\dots, v_k\in\V(D)$ numbered counterclockwise (see Fig.~\ref{fig:polygon} left). Then $\prod_{i=1}^k P_D(v_i)^{\epsilon_\pi(v_i)}=1\in\mathcal G(D)$ where $\epsilon_\pi(v_i)$ are the incidence indices of the crossings to the cell $\pi$.
\end{lemma}

\begin{figure}[h]
\centering\includegraphics[width=0.7\textwidth]{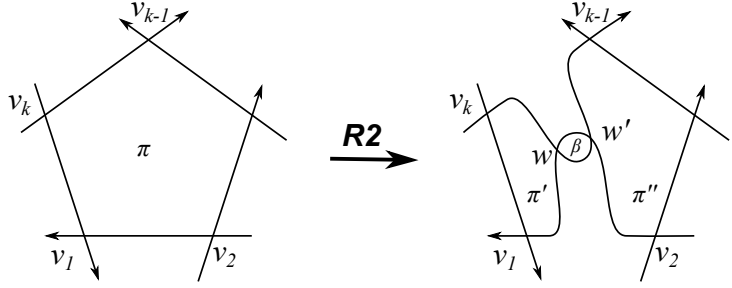}
\caption{Proof of Lemma~\ref{lem:polygon_relation}}\label{fig:polygon}
\end{figure}

\begin{proof}

We prove the statement by induction on $k$. The cases $k=1,2,3$ follow from the properties (P1),(P2) and (P3+).

Let $k>3$. Assume the statement holds for any cell with $\le k-1$ vertices. Let $\pi$ be a cell with $k$ vertices. Apply a second Reidemeister move $f\colon D\to D'$ to split $\pi$ into a bigon $\beta$, a triangle $\pi'$ and a $(k-1)$-gon $\pi''$ as shown in Fig.~\ref{fig:polygon}. Then $\epsilon_\pi(v_1)=\epsilon_{\pi'}(v_1)$, $\epsilon_\pi(v_k)=\epsilon_{\pi'}(v_k)$, $\epsilon_\pi(v_i)=\epsilon_{\pi''}(v_i)$ for $i=2,\dots,k-1$, and $\epsilon_{\pi'}(w)=\epsilon_{\beta}(w)=\epsilon_{\beta}(w')=\epsilon_{\pi''}(w')$. By the property (P2) $P_{D'}(w)P_{D'}(w')=1$, hence,
$$
P_{D'}(w)^{-\epsilon_{\beta}(w))}P_{D'}(w')^{-\epsilon_{\beta}(w')}=1.
$$
By the property (P3+)
$$P_{D'}(v_k)^{\epsilon_{\pi'}(v_k)}P_{D'}(v_1)^{\epsilon_{\pi'}(v_1)}P_{D'}(w)^{\epsilon_{\pi'}(w')}=1,$$
and by induction
$$P_{D'}(w')^{\epsilon_{\pi''}(w')}P_{D'}(v_2)^{\epsilon_{\pi''}(v_2)}\cdots P_{D'}(v_{k-1})^{\epsilon_{\pi''}(v_{k-1})}=1.$$
Thus,
\begin{multline*}
  1=P_{D'}(v_k)^{\epsilon_{\pi'}(v_k)}P_{D'}(v_1)^{\epsilon_{\pi'}(v_1)}P_{D'}(w)^{\epsilon_{\pi'}(w')}\cdot
  P_{D'}(w)^{-\epsilon_{\beta}(w))}P_{D'}(w')^{-\epsilon_{\beta}(w')}\cdot \\
  P_{D'}(w')^{\epsilon_{\pi''}(w')}P_{D'}(v_2)^{\epsilon_{\pi''}(v_2)}\cdots P_{D'}(v_{k-1})^{\epsilon_{\pi''}(v_{k-1})} =\\
  P_{D'}(v_k)^{\epsilon_{\pi}(v_k)}P_{D'}(v_1)^{\epsilon_{\pi}(v_1)}P_{D'}(v_2)^{\epsilon_{\pi}(v_2)}\cdots P_{D'}(v_{k-1})^{\epsilon_{\pi}(v_{k-1})}.
\end{multline*}
Therefore,  $\prod_{i=1}^k  P_{D'}(v_i)^{\epsilon_{\pi}(v_i)}=1$, so $\prod_{i=1}^k  P_{D}(v_i)^{\epsilon_{\pi}(v_i)}=1$.
\end{proof}

Let $D\in Ob(\mathfrak K_z)$ be a diagram of the knot $K$ such that $S\setminus D$ is a union of cells. Let $F(D)=\langle\V(D)\rangle$ be the free group generated by the set of crossings $\V(D)$. For any cell $\pi$ in $S\setminus D$  with the vertices $v_1,\dots, v_k$ (numbered counterclockwise) consider the word
$$
W_\pi=\prod_{i=1}^{k}v_i^{\epsilon_\pi(v_i)}\in F(D),
$$
and let $\mathcal H(D)=F(D)/\langle W_\pi\,|\,\pi\in \pi_0(S\setminus D)\rangle$ be the factor-group of $F(D)$ by the normal subgroup generated by the words $W_\pi$. By Lemma~\ref{lem:polygon_relation} there is a well defined homomorphism $\tilde\phi_D\colon \mathcal H(D)\to\mathcal G(D)$ given by the formula $\tilde\phi_D(v)=P_D(v)$, $v\in\V(D)$.

\begin{lemma}\label{lem:homotopical_group_presentation}
The map $\psi_D\colon v \mapsto [\hat D^l_v]$, $v\in\V(D)$ defines an isomorphism from the group $\mathcal H(D)$ to the group $\Pi=\pi_1(S,z)/\langle[\mathcal K]\rangle$.
\end{lemma}

\begin{proof}

Let $C(D)$ be the Gauss diagram of $D$. Topologically, $C(D)$ is a $1$-dimensional cell complex that consists of the core circle $S^1$ and the chords. We identify the chord set with the set $\V(D)$ of crossings of the diagram $D$. We modify the orientation of the chords: keep the orientation of the chords $v$ such that $sgn(v)=+1$ and change the orientation of the negative chords.

There is a natural projection $h\colon C(D)\to D$ which contracts any chord in $C(D)$ to a point. The map $h$ is a homotopy equivalence.

For any crossing $v\in\V(D)$ the preimage of the based left half of the diagram at the crossing is a loop in the Gauss diagram that goes along the chord $v$ in the direction of the orientation we defined (Fig.~\ref{fig:gauss_knot_half}).

\begin{figure}[h]
\centering\includegraphics[width=0.5\textwidth]{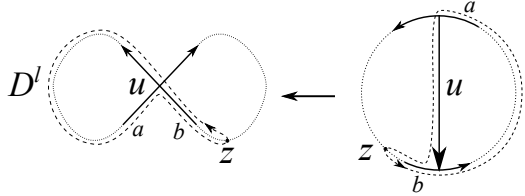}
\caption{Based left half on the Gauss diagram}\label{fig:gauss_knot_half}
\end{figure}

Let $\tilde C=C(D)\cup_{S^1}D^2$ be the cell complex obtained from the Gauss diagram by gluing a disk $D^2$ along the core circle of the diagram. Let $g$ be the map on $\tilde C$ which contracts the disk to a point. The image of $g$ is a bouquet of loops $\bigvee_{v\in\V(C)} S^1_v$, and $g$ is a homotopy equivalence. The map $h$ extends to a homotopy equivalence $h\colon \tilde C\to D\cup_{\mathcal K}D^2$.
Thus, we have an isomorphism of fundamental groups $\pi_1(D\cup_{\mathcal K}D^2)\simeq\pi_1(\tilde C)\simeq \pi_1(\bigvee_{v\in\V(C)} S^1_v)=F(D)$. This isomorphism $g_*(h_*)^{-1}$ identifies the based left half $\hat D^l_v$ of a crossing $v\in\V(D)$ with the generator $v\in F(D)$.

Consider the cell complex $\tilde S=S\cup_{\mathcal K}D^2$ which obtained by the gluing a disk along the knot $\mathcal K$ to the surface $S$. Then $\pi_1(\tilde S)=\pi_1(S)/\langle[\mathcal K]\rangle=\Pi$. On the other hand, the complex $\tilde S$ can be obtained from $D\cup_{\mathcal K}D^2$ by gluing the cells $\pi\in \pi_0(S\setminus D)$. Hence, $\pi_1(\tilde S)=\pi_1(D\cup_{\mathcal K}D^2)/\langle[\partial\pi]\,|\,\pi\in \pi_0(S\setminus D)\rangle$ where $\partial\pi$ is the based loop for the boundary of the cell $\pi$.

\begin{figure}[h]
\centering\includegraphics[width=0.7\textwidth]{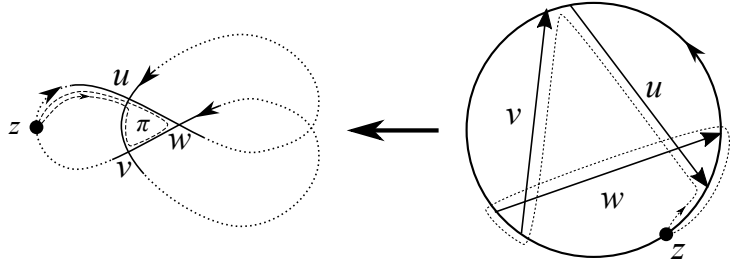}
\caption{Cell loop $\pi$ corresponds to the word $u^{-1}v^{-1}w=W_\pi$}\label{fig:gauss_polygon}
\end{figure}

The isomorphism $g_*(h_*)^{-1}$ maps the homotopy class $[\partial\pi]$ to the word $W_\pi\in F(D)$ (see for example Fig.~\ref{fig:gauss_polygon}). Thus,
$$\Pi=\pi_1(\tilde S)\simeq F(D)/\langle W_\pi\,|\,\pi\in \pi_0(S\setminus D)\rangle=\mathcal H.$$
\end{proof}

Thus, when the complement to a diagram $D$ in the surface $S$ consists of cells, we have a well defined homomorphism $\phi_D\colon\tilde\Pi(D)=\Pi\stackrel{\tilde\phi_D\circ\psi^{-1}_D}{\longrightarrow}\mathcal G(D)$, where for any $v\in\V(D)$  $$\phi_D(HP_D(v))=\phi_D([\hat D^l_v])=\tilde\phi_D(v)=P_D(v).$$ The equality $\tilde\Pi(D)=\Pi$ follows from the fact the elements $[\hat D^l_v]$, $v\in\V(D)$, generate $\Pi$.

Assume now that $D$ is a diagram such that $S\setminus D$ contains components with handles. Then one can apply a sequence of increasing second Reidemeister moves $f\colon D\to D'$ and get a diagram $D'$ such that $S\setminus D'$ splits into cells. The map $f_*\colon \V(D)\to \V(D')$ is an embedding, and $\tilde\Pi(D)$ can be identified with the subgroup in $\tilde\Pi(D')$ generated by the elements $HP_{D'}(f_*(v)),v\in\V(D)$.

The image $im(\mathcal G(f))\subset \mathcal G(D')$ contains elements $P_{D'}(f_*(v))=\phi_{D'}(HP_{D'}(f_*(v)))$, $v\in\V(D)$. Hence, $\phi_{D'}(\tilde\Pi(D))\subset im(\mathcal G(f))$. Since $\mathcal G(f)$ is invertible on $im(\mathcal G(f))$, we can define a homomorphism $\phi_D\colon \tilde\Pi(D)\to \mathcal G(D)$ as the composition $\mathcal G(f)^{-1}\circ\phi_{D'}$. By definition, for any $v\in\V(D)$ we have
$$\phi_D(HP_D(v))=\mathcal G(f)^{-1}(P_{D'}(f_*(v)))=P_D(v).$$

Thus, for any parity functor $(P,\mathcal G)$ we have constructed a homomorphism from the homotopical parity functor to $(P,\mathcal G)$. Since the homotopical functor is reduced, the homomorphism is unique. Theorem~\ref{thm:universal_homotopical_parity_functor} is proved.
\end{proof}

\begin{remark}
Let $z,z'\in S$ be two points and $D$ be a diagram of the knot $\mathcal K$ which contains $z$ and $z'$ as points but not as crossings. Then $D\in Ob(\mathfrak K_z)\cap Ob(\mathfrak K_{z'})$. The coefficient groups $\Pi_z=\pi_1(S,z)/\langle[\mathcal K]\rangle$ and $\Pi_{z'}=\pi_1(S,z')/\langle[\mathcal K]\rangle$ are isomorphic (by the conjugation with the arc $zz'$ of $D$), and the homotopical parity values $hp_D(v)$, $v\in\V(D)$, are identified by this isomorphism. This means that the kernels of the homomorphisms from $F(D)=\langle\V(D)\rangle$ to $\Pi_z$ and $\Pi_{z'}$ coincide and the canonical reductions $(\tilde\Pi_z)_D$ and $(\tilde\Pi_{z'})_D$ are the same factor-group of  $F(D)$. Hence, the canonical reduction of the homotopical parity does not depend on the choice of the base point.

Thus, the homotopical parity functor $(HP,\tilde\Pi)$ can be uniquely extended to the category  $\mathfrak K$ of all diagrams of the knot $\mathcal K$ (without fixing a basepoint). This oriented parity functor is universal.
\end{remark}

\begin{corollary}
The homotopical parity $hp$ is universal oriented parity on the diagram category $\mathfrak K_z$.
\end{corollary}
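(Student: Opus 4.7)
My proof plan combines three earlier results: the defining remark of universal oriented parity, Theorem~\ref{thm:universal_homotopical_parity_functor}, and Proposition~\ref{prop:associated_reduction_parity}.

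First I would recall that, by the remark following Theorem~\ref{thm:universal_parity_functor}, the universal oriented parity on $\mathfrak K_z$ is by definition the parity $p^{P^u}$ associated with the universal parity functor $(P^u,\mathcal G^u)$. By Theorem~\ref{thm:universal_homotopical_parity_functor} the universal parity functor on $\mathfrak K_z$ is the homotopical parity functor $(HP,\tilde\Pi)$, which is the canonical reduction of the homotopical parity $hp$. Since the canonical reduction and the reduction of an oriented parity are isomorphic parity functors, they have the same associated parity, so it suffices to identify the associated parity of the reduction of $hp$ with $hp$ itself.

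Next I would apply Proposition~\ref{prop:associated_reduction_parity} to $p=hp$: it says that the associated parity of the reduction has coefficient group equal to the subgroup $\Pi'\subset\Pi$ generated by the union of the values $hp_D(v)=[\hat D^l_v]$ over all $D\in Ob(\mathfrak K_z)$ and $v\in\V(D)$, and is the corestriction of $hp$ to $\Pi'$. The remaining point is therefore to show $\Pi'=\Pi$.

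This is immediate from Lemma~\ref{lem:homotopical_group_presentation}: for any diagram $D\in Ob(\mathfrak K_z)$ whose complement $S\setminus D$ is a union of cells, the map $\psi_D\colon v\mapsto[\hat D^l_v]$ already surjects onto $\Pi$, so the values $hp_D(v)$ for this single diagram generate all of $\Pi$. (Such a $D$ exists in every diagram category $\mathfrak K_z$, since one can always apply a finite sequence of increasing second Reidemeister moves to cut open any handled region of the complement.) Hence $\Pi'=\Pi$, the associated parity of the reduction of $hp$ coincides with $hp$ itself, and therefore $hp$ is the universal oriented parity on $\mathfrak K_z$.

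No step is a serious obstacle: the whole argument is a bookkeeping assembly of results already in hand. The only point requiring any care is making sure the identification of the associated coefficient group $\Pi'$ with $\Pi$ is free of hidden dependence on the choice of diagram, which is handled by the single-diagram surjectivity statement of Lemma~\ref{lem:homotopical_group_presentation} together with the existence of cellular diagrams.
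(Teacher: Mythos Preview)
Your proof is correct and follows essentially the same route as the paper's: both combine Theorem~\ref{thm:universal_homotopical_parity_functor} with Proposition~\ref{prop:associated_reduction_parity} and the fact that the based left halves $[\hat D^l_v]$ generate $\Pi$. The only difference is cosmetic: you justify that generation via Lemma~\ref{lem:homotopical_group_presentation} and the existence of cellular diagrams, while the paper simply asserts it (and also cites Proposition~\ref{prop:universal_associated_parity}, which you replace by invoking the defining remark for the universal oriented parity).
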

\begin{proof}
The statement follows from Theorem~\ref{thm:universal_homotopical_parity_functor}, Propostions~\ref{prop:universal_associated_parity},\ref{prop:associated_reduction_parity}, and the fact the homotopy classes of based halves $\hat D^l_v$ of crossings $v$ of knot diagrams $D$ generate the fundamental group $\pi_1(S,z)$.
\end{proof}

Let $H_1(S,\Z)$ be the $1$-dimensional integral homology group and $H=H_1(S,\Z)/\langle[\mathcal K]\rangle$ be the factor-group by the homology class of the knot $\mathcal K$. There is a natural homomorphism $\rho$ from $\Pi=\pi_1(S,z)/\langle[\mathcal K]\rangle$ to $H$. The compositions $\overline{hp}_D=\rho\circ hp_D\colon\V(D)\to H$, $D\in Ob(\mathfrak K_z)$, define an oriented parity with coefficients in the group $H$, called the {\em homological parity} on diagrams of the knot $\mathcal K$ in the surface $S$.

\begin{corollary}
The homological parity $\overline{hp}$ is the universal parity among all oriented parities with coefficients in abelian groups on the diagram category $\mathfrak K_z$. In other word, for any oriented parity $p$ with coefficients in an abelian group $A$ there is a unique homomorphism $\psi\colon H\to A$ such that $p_D=\psi\circ \overline{hp}_D$ for all $D\in Ob(\mathfrak K_z)$.
\end{corollary}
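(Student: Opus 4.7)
The plan is to deduce this corollary directly from the preceding corollary (that $hp$ is the universal oriented parity on $\mathfrak{K}_z$) combined with the universal property of abelianization. Since $\overline{hp}_D = \rho \circ hp_D$ by definition, and $\rho\colon \Pi \to H$ is the projection to the abelianization of $\Pi$ modulo $\langle [\mathcal K]\rangle$, any oriented parity with coefficients in an abelian group should factor canonically through $\rho$.

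More concretely, here is how I would proceed. Let $p$ be an oriented parity on $\mathfrak{K}_z$ with coefficients in an abelian group $A$. By the previous corollary, there exists a unique group homomorphism $\phi\colon \Pi \to A$ satisfying $p_D = \phi \circ hp_D$ for every $D\in Ob(\mathfrak{K}_z)$. Since $A$ is abelian, $\phi$ annihilates the commutator subgroup $[\Pi,\Pi]$, so it descends to a unique homomorphism $\psi\colon \Pi^{ab} \to A$. The next step is to identify $\Pi^{ab}$ with $H$: writing $\Pi = \pi_1(S,z)/\langle\!\langle [\mathcal K]\rangle\!\rangle$ and passing to the abelianization, we get $\Pi^{ab} = H_1(S,\Z)/\langle [\mathcal K]\rangle = H$, and under this identification the canonical projection $\Pi\to\Pi^{ab}$ is exactly $\rho$.

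Combining the two factorizations yields
\[
p_D = \phi \circ hp_D = \psi \circ \rho \circ hp_D = \psi \circ \overline{hp}_D,
\]
for every $D\in Ob(\mathfrak{K}_z)$, which is the existence part. For uniqueness, suppose $\psi'\colon H \to A$ also satisfies $p_D = \psi' \circ \overline{hp}_D$ for all $D$. Then $\psi'\circ\rho$ is a homomorphism $\Pi \to A$ realizing $p$ through $hp$, so by the uniqueness part of the previous corollary $\psi'\circ\rho = \phi$, and since $\rho$ is surjective (as the homology classes $[\hat D^l_v]$ generate $H$, just as their homotopy classes generate $\Pi$) we conclude $\psi' = \psi$.

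There is no significant obstacle here; the argument is a purely formal two-step universal property chase, and the only mild point to verify is that the abelianization of $\pi_1(S,z)/\langle\!\langle [\mathcal K]\rangle\!\rangle$ is indeed $H_1(S,\Z)/\langle [\mathcal K]\rangle$, which follows from the right-exactness of abelianization applied to the quotient by a normally generated subgroup.
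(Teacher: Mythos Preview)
Your argument is correct and follows essentially the same route as the paper: invoke the universality of the homotopical parity $hp$ to obtain a unique $\phi\colon\Pi\to A$, then use that $A$ is abelian to factor $\phi$ uniquely through the abelianization $\rho\colon\Pi\to H=\Pi/[\Pi,\Pi]$. Your treatment is in fact slightly more explicit than the paper's in spelling out the uniqueness step via surjectivity of $\rho$ and in justifying the identification $\Pi^{ab}\cong H$, but there is no substantive difference in strategy.
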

\begin{proof}
By universality of the homotopical parity there is a unique homomorphism $\tilde\psi\colon \Pi\to A$ such that $p_D=\tilde\psi\circ hp_D$ for all $D$. Since $H=\Pi/[\Pi,\Pi]$ is the abelinization of the group $\Pi$ and $A$ is abelian, there is a unique homomorphism $\psi$ such that $\tilde \psi=\psi\circ\rho$. Then $p_D=\psi\circ\rho\circ hp_D=\psi\circ \overline{hp}_D$.
\end{proof}

Note that the homology does not depend on a base point. Therefore, the homological parity $\overline{hp}$ can be considered as an oriented parity on the category $\mathfrak K$ of all diagrams of $\mathcal K$ in $S$. The parity is universal among all oriented parities on $\mathfrak K$ with coefficients in an abelian group.

If $\mathcal K$ is a classical knot, i.e. knot on the sphere $S^2$ we get the following statement (cf.~\cite[Corollary 3.3]{IMN1}). It shows that generalization from parities to oriented parity functors is unproductive for classical knots.

\begin{corollary}
Any oriented parity functor on diagrams of a classical knot is trivial.
\end{corollary}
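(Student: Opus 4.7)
The plan is to deduce triviality by applying the universal property of the homotopical parity functor together with the observation that $S^{2}$ is simply connected. First I would invoke Theorem~\ref{thm:universal_homotopical_parity_functor}, extended by the preceding remark to the full diagram category $\mathfrak K$: the homotopical parity functor $(HP,\tilde\Pi)$ is the universal oriented parity functor on diagrams of any knot in a closed oriented surface, in particular for a classical knot $\mathcal K$ whose diagrams lie in $S^{2}$ (Kuperberg's theorem justifies restricting to diagrams in $S^{2}$).

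Next I would compute the coefficient groups of this universal functor in the classical case. For $S=S^{2}$ we have $\pi_{1}(S^{2},z)=1$, so
\[
\Pi=\pi_{1}(S^{2},z)/\langle[\mathcal K]\rangle
\]
is the trivial group. Since $\tilde\Pi(D)$ is a canonical reduction of $\Pi$ (a quotient of the free group on $\V(D)$ that maps onto $\Pi$ with the same kernel as the map $F(D)\to\Pi$), and since every generator $HP_{D}(v)=[\hat D^{l}_{v}]$ of $\tilde\Pi(D)$ already maps to $1\in\Pi$, we conclude $\tilde\Pi(D)=1$ for every diagram $D$ of $\mathcal K$. Hence $HP_{D}(v)=1$ for all $v\in\V(D)$.

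Finally I would apply the universal property to an arbitrary oriented parity functor $(P,\mathcal G)$ on $\mathfrak K$: there exists a family of homomorphisms $\phi_{D}\colon\tilde\Pi(D)\to\mathcal G(D)$ with $P_{D}=\phi_{D}\circ HP_{D}$. Because $\tilde\Pi(D)=1$, we immediately obtain $P_{D}(v)=\phi_{D}(HP_{D}(v))=\phi_{D}(1)=1$ for every crossing $v$ of every diagram $D$, which is precisely the definition of a trivial oriented parity functor.

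There is essentially no obstacle beyond correctly invoking the universality result; the only subtle point is to make sure that the reduction $\tilde\Pi(D)$ really is trivial and not merely that its generators are, but this is automatic because by construction $\tilde\Pi(D)$ is generated by the images $HP_{D}(v)$.
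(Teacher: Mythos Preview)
Your proof is correct and follows essentially the same approach as the paper: invoke the universality of the homotopical parity functor, observe that $\pi_1(S^2,z)/\langle[\mathcal K]\rangle$ is trivial so each $\tilde\Pi(D)$ is trivial, and conclude via the factorization $P_D=\phi_D\circ HP_D$ that $P_D(v)=1$ for all crossings. The only additions you make (invoking Kuperberg to justify working in $S^2$, and the remark extending universality to the full category $\mathfrak K$) are minor clarifications rather than a different argument.
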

\begin{proof}
Let $(P,\mathcal G)$ be an oriented parity functor. By universality, there is a family of homomorphisms $\psi_D\colon\tilde\Pi(D)\to\mathcal G(D)$, $D\in Ob(\mathfrak K_z)$ from the homotopic parity $(HP,\tilde\Pi)$. Since the group $\tilde\Pi(D)$ is identified with a subgroup in the trivial group $\pi_1(S^2,z)/\langle[\mathcal K]\rangle=\{1\}$, for any diagram $D$ and any crossing $v\in\V(D)$  $HP_D(v)=1$ and $P_D(v)=\psi_D(HP_D(v))=1$.
\end{proof}

\section{Linking number invariant}

Recall that for an oriented two-component link $L=K_1\cup K_2$ its linking number is the sum
$$
lk(L)=\frac 12\sum_{c\in K_1\cap K_2}sgn(c)
$$
of the signs of the crossings where two different components intersect. Using the link parity $lp$ from Example~\ref{exa:link_parity} we can rewrite the definition as follows
$$
lk(L)=\frac 12\sum_{c\in\V(L)\colon lp_L(c)=1}sgn(c).
$$
The last formula admits a broad generalization. If there is a parity on the diagram category, one can assign a linking number to each nonzero parity value.

\begin{definition}
Let $\mathcal K$ be a virtual knot or a knot in a surface, $\mathfrak K$ be its diagram category, and $p$ be an oriented parity on $\mathfrak K$ with coefficients in a group $G$. Given a diagram $D$ of $\mathcal K$, we define the {\em linking invariant} as the element
$$
lk^p(\mathcal K)=\sum_{v\in\V(D)}sgn(v)\cdot p_D(v)^{sgn(v)}-w(D)\cdot 1\in\Z[G]
$$
in the group algebra of $G$. Here $w(D)=\sum_{v\in\V(D)}sgn(v)$ is the writhe of the diagram $D$.

For any $g\in G$, $g\ne 1$, the coefficient $lk^p_g(\mathcal K)=\sum_{v\in\V(D)\colon p_D(v)^{sgn(v)}=g}sgn(v)$ is called the {\em linking number} of the knot $\mathcal K$ at the parity value $g$.
\end{definition}
The independence of $lk^p(\mathcal K)$ on the choice of the diagram $D$ follows directly from the properties (P0),(P1),(P2).

In fact, the linking invariant can be considered as an example of virtual knot invariants called {\em index polynomials}~\cite{Cheng, IKL, K2}.
An index polynomial can be defined whenever one has a chord index~\cite{Cheng2} on the diagrams. For any oriented parity $p$ the value $p_D(v)^{sgn(v)}$, $v\in\V(D), D\in\mathfrak K$, is an example of such a chord index.

Let $(P,\mathcal G)$ be an oriented parity functor on the diagram category $\mathfrak K$. For any diagram $D$ the linking invariant $lk^P(D)$ is an element of the algebra $\Z[\mathcal G(D)]$. In general case, there is no canonical way to identify the groups $\mathcal G(D)$. We have to weaken the linking invariant to get rid of dependence on coefficients groups.

\begin{definition}
Let $\mathcal K$ be a virtual knot or a knot in a surface, $\mathfrak K$ be its diagram category, and $(P,\mathcal G)$ be an oriented parity functor on $\mathfrak K$. For any diagram $D$ of $\mathcal K$, define the {\em linking multiset} by the formula
$$LS^P(D)=\{lk^P_g(D)\,|\, g\in \mathcal G(D), g\ne 1 \}.$$ One can consider the linking multiset as an element of $\Z[\Z]$:
$$
LS^P(D)=\sum_{k\in\Z}\left|\{g\in\mathcal G(D)\setminus\{1\}\,|\, lk^P_g(D)=k\}\right|\cdot\boldsymbol k.
$$
\end{definition}

The invariance of the linking multiset $LS^P(\mathcal K)$ follows from the invariance of the linking invariant.

\begin{example}
Let $\mathcal K$ be the virtual trefoil considered as a knot in the torus (Fig.~\ref{fig:virtual_trefoil2}). Then the homology class of $\mathcal K$ is $2\lambda+\mu$ where $\lambda$ is the longitude and $\mu$ is the meridian of the torus. Hence, the coefficient group of the homological parity is
$$\mathcal H=H_1(T^2,\Z)/\langle[\mathcal K]\rangle=\Z\langle\lambda,\mu\rangle/\langle 2\lambda+\mu\rangle\simeq\Z\lambda.$$
The homological parity of the crossings is $hp(u)=\lambda$, $hp(v)=\lambda+\mu=-\lambda$. The linking invariant is equal to $lk^{hp}(\mathcal K)=(-1)\cdot [(-1)\lambda]+1\cdot[1\cdot(-\lambda)]=2\cdot(-\lambda)$. The linking multiset is $LK^{hp}(\mathcal K)=\{2\}=1\cdot\mathbf 2$.

\begin{figure}[h]
\centering\includegraphics[width=0.3\textwidth]{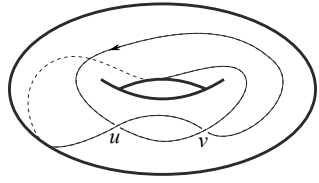}
\caption{Virtual trefoil}\label{fig:virtual_trefoil2}
\end{figure}
\end{example}

Let $\mathcal K$ be a flat or free knot or an immersed curve in a fixed surface. In this case we can not use signs of crossings and lose about a half information on linking numbers.

\begin{definition}
Let $\mathcal K$ be a flat or free knot or an immersed curve in a fixed surface, $\mathfrak K$ be its diagram category, and $p$ be an oriented parity on $\mathfrak K$ with coefficients in a group $G$. Let $G_{inv}=\{g\in G\,|\, g^2=1,g\ne 1\}$ be the set of involutions in $G$, and
$G_{ni}=G\setminus (G_{inv}\cup\{1\})$ be the set of elements of order greater then $2$. Let $D$ be a diagram of the knot $\mathcal K$. Define the {\em linking invariant} of the knot $\mathcal K$ as the pair $lk^p(\mathcal K)=(lk^p_{inv}(\mathcal K), lk^p_{ni}(\mathcal K))$ where
$$
lk^p_{inv}(\mathcal K)=\sum_{g\in G_{inv}}|\{v\in\V(D)\,|,\ p_D(v)=g\}|\cdot g\in \Z_2[G_{inv}],
$$
\begin{multline*}
lk^p_{ni}(\mathcal K)=\\
\sum_{g\in G_{ni}}\left(|\{v\in\V(D)\,|,\ p_D(v)=g\}|-|\{v\in\V(D)\,|,\ p_D(v)=g^{-1}\}|\right)\cdot g\in \Z[G_{ni}].
\end{multline*}

The element $lk^p_{inv}(\mathcal K)$ is called the {\em involutive part of the linking invariant}, and $lk^p_{ni}(\mathcal K)$ is the {\em non involutive part}.

For an element $g\in G$, $g\ne 1$, the linking number $lk^p_g(\mathcal K)$ of the knot $\mathcal K$ at the parity value $g$ is the coefficient of the linking invariant at $g$, i.e. $lk^p_g(\mathcal K) = |p_D^{-1}(g)|\in\Z_2$ for $g\in G_{inv}$, and
$lk^p_g(\mathcal K) =|p_D^{-1}(g)|-|p_D^{-1}(g^{-1})|$ for $g\in G_{ni}$.

Note that $lk^p_{g^{-1}}(\mathcal K)=-lk^p_{g}(\mathcal K)$ for any $g\ne 1$.
\end{definition}

\begin{proposition}
The linking invariant $lk^p$ and the linking numbers $lk^p_g$, $g\in G\setminus\{1\}$, do not depend on the choice of the diagram $D$.
\end{proposition}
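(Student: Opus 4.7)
The plan is to verify that both components $lk^p_{inv}$ and $lk^p_{ni}$ are preserved under every elementary morphism $f\colon D\to D'$ in $\mathfrak K$, using the properties (P0)--(P3+). Since each invariant is a count of crossings with prescribed parity values, for isotopies, isomorphisms, and third Reidemeister moves the partial bijection $f_*$ is total on $\V(D)$ and preserves parities by (P0); hence the multiplicities $|p_D^{-1}(g)|$ for every $g\in G$ are unchanged, and so are both $lk^p_{inv}$ and $lk^p_{ni}$. Increasing first and second Reidemeister moves are obtained from the corresponding decreasing moves by inversion, so it suffices to treat the decreasing cases.

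For a decreasing first Reidemeister move that eliminates a crossing $v$, property (P1) gives $p_D(v)=1$. The sums defining $lk^p_{inv}$ and $lk^p_{ni}$ range only over $g\ne 1$, so $v$ contributes to neither, and both invariants are unchanged.

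The substantive case is a decreasing second Reidemeister move that eliminates crossings $v_1,v_2$. By (P2) we have $p_D(v_2)=p_D(v_1)^{-1}$. Set $g=p_D(v_1)$ and split into three subcases. If $g=1$, then $p_D(v_2)=1$ as well and there is nothing to check. If $g\in G_{inv}$, then $g^{-1}=g$, so both crossings carry the same involutive value $g$; removing them decreases $|p_D^{-1}(g)|$ by $2$, which vanishes in the $\Z_2$-coefficients of $lk^p_{inv}$. If $g\in G_{ni}$, then $g\ne g^{-1}$ and both $g,g^{-1}\in G_{ni}$; removing $v_1,v_2$ decreases $|p_D^{-1}(g)|$ and $|p_D^{-1}(g^{-1})|$ each by exactly one, so the coefficient $|p_D^{-1}(g)|-|p_D^{-1}(g^{-1})|$ of $g$ in $lk^p_{ni}$ is unchanged, and symmetrically for $g^{-1}$. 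The independence of each individual linking number $lk^p_g$ is then immediate, since it is precisely the coefficient of $g$ in the invariant pair $lk^p$.

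There is no real obstacle; the argument is case-by-case bookkeeping. The only design-level point worth noting is that the $\Z_2$ versus $\Z$ split between involutive and non-involutive coefficient groups is exactly what makes the (P2) relation collapse: involutions cancel because they appear with multiplicity $2$, while non-involutions cancel because $g$ and $g^{-1}$ always appear simultaneously and are recorded with opposite signs. This is precisely where the loss of sign information (compared to the oriented virtual-knot setting) forces the weaker coefficient ring.
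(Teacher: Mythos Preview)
Your proof is correct and follows essentially the same approach as the paper's own proof: both identify the second Reidemeister move as the only substantive case and dispatch it by the same three-way case split on whether $g=1$, $g\in G_{inv}$, or $g\in G_{ni}$. The paper phrases the argument with an increasing $R2$ rather than a decreasing one and is terser about the trivial cases (isotopies, $R1$, $R3$), but the content is identical.
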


\begin{proof}
The only nontrivial part is invariance under the second Reidemeister move. Let a diagram $D'$ be obtained from $D$ by an increasing second Reidemeister move, and $v_1,v_2$ be the new two crossings in $D'$. Let $p_{D'}(v_1)=g$, $g\in G$. Then $p_{D'}(v_2)=g^{-1}$. By the property (P0) $lk^p_{g'}(D')=lk^p_{g'}(D)$ for all $g'\ne g,g^{-1}$. If $g=g^{-1}\in G_{inv}$ then $lk^p_{g}(D')=lk^p_{g}(D)+2=lk^p_{g}(D)$. If $g\in G_{ni}$ then $lk^p_{g}(D')=lk^p_{g}(D)+1-1=lk^p_{g}(D)$, analogously, $lk^p_{g^{-1}}(D')=lk^p_{g^{-1}}(D)$. Thus, the linking numbers are invariant.
Then $lk^p(D)=lk^p(D')$.
\end{proof}

Denote $\bar G_{ni}=G_{ni}/\sigma$ where $\sigma(g)=g^{-1}$, $g\in G_{ni}$. Then for any $x\in \bar G_{ni}$, $x=\{g,g^{-1}\}$, the value $lk^p_x(D)=|lk^p_g(D)|=|lk^p_{g^{-1}}(D)|$ is well defined.

\begin{definition}
Let $\mathcal K$ be a flat or free knot or an immersed curve in a fixed surface, $\mathfrak K$ be its diagram category, and $(P,\mathcal G)$ be an oriented parity functor on $\mathfrak K$. For any diagram $D$ of $\mathcal K$, the {\em involutive linking multiset} is $LS^P_{inv}(D)=\{lk^P_g(D)\,|\, g\in \mathcal G(D)_{inv}\}$, and the {\em non-involutive linking multiset} is $LS^P_{ni}(D)=\{lk^P_x(D)\,|\, x\in \overline{\mathcal G(D)}_{ni}\}$. In other words, one can consider the linking multiset as an element of :
\begin{gather*}
LS^P_{inv}(D)=\sum_{k\in\Z_2}\left|\{g\in\mathcal G(D)_{inv}\,|\, lk^P_g(D)=k\}\right|\cdot{\boldsymbol k}\in\Z[\Z_2],\\
LS^P_{ni}(D)=\sum_{k\in\Z, k\ge 0}\left|\{x\in\overline{\mathcal G(D)}_{ni}\,|\, lk^P_x(D)=k\}\right|\cdot{\boldsymbol k}\in\Z[\N\cup\{0\}].
\end{gather*}
\end{definition}

\begin{example}
Let $(P^{og},\mathcal G^{og})$ be the oriented Gaussian parity on free knots. Then the noninvolutive linking multiset is $LS^P_{ni}(\mathcal K)=\{ L_{odd}(\mathcal K)\}$ (see Remark~\ref{rem:oriented_gaussian_parity_invariants}). The involutive linking multiset is trivial $LS^P_{inv}(\mathcal K)=\{|E_1|\}=\{0\}$ because the class $E_1$ contains even number of crossings.
\end{example}

\bibliographystyle{amsplain}

\end{document}